\documentclass{article}
\usepackage[T1]{fontenc}
\usepackage{amsmath,amssymb,verbatim,amsthm,amscd}
\usepackage[all]{xy}
\usepackage{titlesec}
\usepackage[margin=22.5truemm]{geometry}
\usepackage{indentfirst}
\usepackage[utf8]{inputenc}
\usepackage{tikz-cd}

\newcommand{\C}{\mathbb{C}}

\newcommand{\cC}{\mathcal{C}}

\newcommand{\cI}{\mathcal{I}}

\newcommand{\cO}{\mathcal{O}}

\renewcommand{\i}{\sqrt{-1}}
\renewcommand{\leq}{\leqslant}
\renewcommand{\geq}{\geqslant}
\newcommand{\abs}[1]{\left\lvert#1\right\rvert}

\DeclareMathOperator{\imaginary}{\sqrt{-1}}

\DeclareMathOperator{\codim}{codim}

\DeclareMathOperator{\dd}{\sqrt{-1}\partial\bar{\partial}}
\DeclareMathOperator{\dbar}{\bar{\partial}}

\numberwithin{equation}{section}

\newtheorem{prop} {Proposition} [section]
\newtheorem{thm}[prop] {Theorem}
\newtheorem{main thm}[prop] {Main Theorem}

\newtheorem{dfn}[prop] {Definition}
\newtheorem{lem}[prop] {Lemma}

\newtheorem{exam}[prop]{Example}

\newtheorem*{thmA*}{\bf{Theorem A}}
\newtheorem*{thmB*}{\bf{Theorem B}}
\newtheorem*{thmC*}{\bf{Theorem C}}
\newtheorem*{thmD*}{\bf{Theorem D}}
\newtheorem*{thmE*}{\bf{Theorem E}}

\newtheorem{dfn*}{\bf{Definition}}

\begin{document}

\title{\Large{A Generalized $L^2$ Division Theorem and the Strong Openness Property of Multiplier Ideal Sheaves}}
\author{Masakazu Takakura}
\date{}

\maketitle
\begin{abstract}
In this paper, we generalize Guan's sharp effective strong openness theorem to the case of several plurisubharmonic weights.
The proof uses a generalized Skoda-type $L^2$ division theorem for multiplier ideal sheaves. We also discuss the relationships among the $L^2$-division theorem, the $L^2$-extension theorem and the effective openness theorem.
\end{abstract}

\tableofcontents
\section{Introduction}

Let $\Omega$ be a domain in $\mathbb{C}^n$, and let $\varphi$ be a plurisubharmonic function on $\Omega$.
The multiplier ideal sheaf $\mathcal{I}(\varphi)$ associated with $\varphi$ is defined by
$
\mathcal{I}_x(\varphi)
:=
\{\, f\in\mathcal{O}_x \; ; \; |f|^2 e^{-\varphi} \text{ is integrable in a neighborhood of } x \,\}.
$
The multiplier ideal sheaf is a fundamental object in complex geometry (see \cite{Demailly lec}).
In \cite{DK}, Demailly and Koll\'ar formulated the openness conjecture for complex singularity exponents.

\medskip

\noindent
\textbf{Strong openness conjecture.}
For every $x\in\Omega$, there exists $\varepsilon>0$ such that
$
\mathcal{I}_x(\varphi)=\mathcal{I}_x\bigl((1+\varepsilon)\varphi\bigr).
$

\medskip

\noindent
\textbf{Weak openness conjecture.}
If $\mathcal{I}_x(\varphi)=\mathcal{O}_x$, then there exists $\varepsilon>0$ such that
$
\mathcal{I}_x\bigl((1+\varepsilon)\varphi\bigr)=\mathcal{O}_x .
$

\medskip
For $n=2$, Favre--Jonsson proved the weak openness conjecture (\cite{FJ}). In \cite{B openness}, Berndtsson later proved the weak openness conjecture in arbitrary dimension, after treating the case of isolated singularities in \cite{B projective}.
Finally, Guan--Zhou proved the strong openness theorem (\cite{GZ openness}).

The next natural question is how large the constant $\varepsilon>0$ can be chosen. In \cite{GZ sharp eff}, Guan obtained the following sharp effective result (Guan--Zhou obtained an earlier non-sharp result in \cite{GZ eff}):

Let $\Omega$ be a pseudoconvex domain in $\mathbb{C}^n$ containing $0$.
Let $\varphi$ be a negative plurisubharmonic function.
Let $f$ be a holomorphic function satisfying
$
I=\int_{\Omega} |f|^2 e^{-\varphi} < \infty .
$
For $p>1$, define
$
c_{\varphi,f}(p)
=
\inf \left\{
\int_{\Omega} |g|^2 \; ;\;
f-g \in \mathcal{I}_0(p\varphi)
\right\}.
$

\begin{thm}\label{Guan's sharp effective openness theorem}
The inequality
$
c_{\varphi,f}(p) \le (1-1/p) I
$ holds.
\end{thm}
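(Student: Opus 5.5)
The plan is to deduce the inequality from a concavity property of a ``minimal $L^2$ integral'' function built from the sublevel sets of $\psi:=p\varphi$. Note that $\psi<0$ is plurisubharmonic, $\mathcal{I}_0(p\varphi)=\mathcal{I}_0(\psi)$, and $e^{-\varphi}=e^{-\psi/p}$. For $t\ge 0$ I define
\[
G(t)=\inf\Bigl\{\int_{\{\psi<-t\}}|g|^2 \;;\; g\in\mathcal{O}(\{\psi<-t\}),\ f-g\in\mathcal{I}_0(\psi)\Bigr\},
\]
so that $G(0)=c_{\varphi,f}(p)$, and $G$ is nonincreasing.

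\textbf{Step 1 (concavity).} I would show that $r\mapsto G(-\log r)$ is concave on $(0,1]$. The input is an optimal $L^2$ estimate, for which I will use a Skoda/Guan--Zhou type division or extension theorem with sharp constant. Fix $0\le t_0<t_1$ and a near-minimizer $g_1$ on $\{\psi<-t_1\}$. I then solve a $\bar\partial$-equation
\[
\bar\partial u=\bar\partial\bigl(b(\psi)\,g_1\bigr),
\]
where $b$ is a cutoff interpolating between the two levels. The twisted weights are chosen so that $u\in\mathcal{I}_0(\psi)$ and so that
\[
\tilde g:=b(\psi)g_1-u
\]
is holomorphic on $\{\psi<-t_0\}$. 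The sharp constant in the estimate should give
\[
\int_{\{\psi<-t_0\}}|\tilde g|^2\le G(t_1)+\frac{e^{-t_0}-e^{-t_1}}{e^{-t_1}}\bigl(G(t_1)-G(t_1')\bigr)
\]
in the limit, which is the infinitesimal form of concavity in $r=e^{-t}$. I must also check that the minimizers exist and are unique (by weak compactness in $L^2$ and closedness of $\mathcal{I}_0(\psi)$ under $L^2$ limits), and that $G(t)\to 0$ as $t\to\infty$. The latter holds because $\{\psi<-t\}$ shrinks and $\int|f|^2<\infty$ near $0$.

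\textbf{Step 2 (pointwise lower bound).} Concavity of $r\mapsto G(-\log r)$ on $(0,1]$ together with $\lim_{r\to 0}G(-\log r)=0$ gives
\[
G(-\log r)\ge r\,G(0),\qquad\text{i.e.}\qquad G(t)\ge e^{-t}G(0).
\]

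\textbf{Step 3 (integration).} By Fubini,
\[
I=\int_\Omega|f|^2e^{-\psi/p}=\int_\Omega|f|^2+\frac1p\int_0^\infty e^{t/p}\int_{\{\psi<-t\}}|f|^2\,dt .
\]
The restriction of $f$ is admissible for every $G(t)$, so
\[
I\ \ge\ G(0)+\frac1p\int_0^\infty e^{t/p}G(t)\,dt\ \ge\ G(0)\Bigl(1+\frac1p\int_0^\infty e^{-(1-1/p)t}dt\Bigr)=\frac{p}{p-1}\,G(0).
\]
This gives $c_{\varphi,f}(p)=G(0)\le(1-1/p)I$.

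The main obstacle is Step 1. One has to get exactly the sharp constant, which requires the optimal-constant version of the $L^2$ division/extension theorem with carefully chosen auxiliary functions of $\psi$ (solving an ODE for the twist). One also has to handle the fact that $\psi$ is not smooth: I would approximate by smooth psh functions on a Stein exhaustion and pass to weak limits. Steps 2 and 3 are routine once concavity is established.
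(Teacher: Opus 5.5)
Your route is Guan's original one: concavity of $r\mapsto G(-\log r)$ for minimal integrals on the sublevel sets of $\psi=p\varphi$, followed by layer-cake integration. Steps 2 and 3 are correct. But all of the content lies in Step 1, and your sketch of it does not go through as written.

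Your displayed inequality is false. On the unit disc with $\varphi=\frac1p\log|z|^2$ and $f=1$ one has $G(t)=\pi e^{-t}$ exactly. With $t_0=0$ your bound then reads $\pi\le\pi\bigl[e^{-t_1}+(1-e^{-t_1})(1-e^{t_1-t_1'})\bigr]<\pi$. What one can actually prove is the difference-quotient inequality
\[
G(t_0)\le G(t_1)+\bigl(e^{-t_0}-e^{-t_1}\bigr)\liminf_{B\to0^+}\frac{G(t_1)-G(t_1+B)}{e^{-t_1}-e^{-t_1-B}} .
\]
Proving it needs an ingredient your sketch omits.

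\begin{itemize}
\item Orthogonality of the exact minimizer $g_1$: $\int_{\{\psi<-t_1\}}|h|^2=G(t_1)+\int_{\{\psi<-t_1\}}|h-g_1|^2$ for every admissible $h$. Applied to $h=\tilde g$, it yields $\int_{\{\psi<-t_0\}}|\tilde g|^2\le G(t_1)+\int|u|^2+o(1)$ as $B\to0$. Bounding $\|\tilde g\|\le\|b(\psi)g_1\|+\|u\|$ instead loses the sharp constant.
\item Exact minimality also gives $\int_{\{-t_1-B\le\psi<-t_1\}}|g_1|^2\le G(t_1)-G(t_1+B)$. This is what converts the sharp bound $\int|u|^2\le(e^{-t_0}-e^{-t_1-B})\frac1B\int_{\{-t_1-B<\psi<-t_1\}}|g_1|^2e^{-\psi}$ into the difference quotient.
\item You must still show that this one-sided inequality forces concavity, which uses semicontinuity of $G$.
\end{itemize}

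With these pieces (equivalently, citing Guan's concavity theorem) the argument closes, but it is then Guan's proof. Two minor repairs:
\begin{itemize}
\item $G(t)\to0$ because $\int_\Omega|f|^2\le I$ (as $\varphi<0$) and the sets $\{\psi<-t\}$ decrease to the null set $\{\psi=-\infty\}$, not to $0$.
\item By Skoda's integrability theorem, $0\in\{\psi<-t\}$ for every $t$ unless $\mathcal{I}_0(p\varphi)=\mathcal{O}_0$, and in that case $c_{\varphi,f}(p)=0$.
\end{itemize}

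The paper obtains the statement as the case $r=1$, $\psi=0$ (so $m=1$) of Theorem \ref{main theorem}. Its global condition on $f-g$ is stronger than the germ condition, so that theorem implies this one. It never uses sublevel sets, minimizers, cut-offs of $\psi$, or concavity. For each $t>0$ it applies Theorem \ref{sharp L^2-division theorem} to the two weights $\log\frac{t}{1+t}+p\varphi$ and $-\log(1+t)$. Here $q=1$, and the sum of their exponentials is $<1$ because $\varphi<0$. This splits $f=h_1+h_2$ with $h_1\in\Gamma(\Omega,\mathcal{I}(p\varphi))$ and $\int_\Omega|h_2|^2\le(1+t)\int_\Omega|f|^2(1+te^{p\varphi})^{-2}$. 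Hence $c_{\varphi,f}(p)/(1+t)\le\int_\Omega|f|^2(1+te^{p\varphi})^{-2}$. Putting $t=x^p$ and integrating over $x>0$, Tonelli gives $\theta_1(p)\,c_{\varphi,f}(p)\le\theta_2(p)\,I$. The Beta-integral identity $\theta_2/\theta_1=1-1/p$ finishes.

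So the paper's only sharp analytic input is one global twisted $\bar\partial$-estimate (Theorem \ref{a Priori} combined with Lemma \ref{Skoda trick}). Averaging over the auxiliary parameter $t$ does the work that concavity plus layer-cake integration does for you. This is why the proof is short and extends directly to several weights. Your route, once Step 1 is done properly, yields more: the concavity of $G(-\log r)$ itself, and the bound $G(t)\ge e^{-t}G(0)$ on every sublevel set, which Guan exploits elsewhere (e.g.\ for the Saitoh conjecture).
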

In the course of this proof, Guan established the concavity property of the $L^2$ minimal integral associated with plurisubharmonic functions. As an application, he provided a proof of the Saitoh conjecture (\cite{saitoh}), namely, a comparison theorem between the Bergman kernel and the Szeg\"{o} kernel.

The main result of this paper is an effective strong openness theorem for several plurisubharmonic functions: Let $\Omega$ be a pseudoconvex domain in $\mathbb{C}^n$, $\psi$ be a plurisubharmonic function and $(\varphi_1,\dots,\varphi_r)$ be a tuple of negative plurisubharmonic functions on $\Omega$. Set $m:=\min(n,r)$. For any holomorphic function $f \in \cO(\Omega)$ satisfying
$I = \int_{\Omega} \abs{f}^2 e^{-(\psi+\varphi_1+\cdots+\varphi_r)} < +\infty$ and any positive numbers $p_1,\dots,p_r > 1$ satisfying $\sum_{i=1}^r 1/p_i < m$,
we put $$ c_{\varphi_1,\dots,\varphi_r,f,\psi}(p_1,\dots,p_r) := \inf \big\{ \int_{\Omega} \abs{g}^2 e^{-\psi} : g \in \cO(\Omega)\;, \;f - g \in \sum_{i=1}^r \Gamma(\Omega, \cI(p_i\varphi_i + \psi)) \big\}\;\;\;.$$
We show the following.

\begin{thm}\label{main theorem}
  The inequality
  $c_{\varphi_1,\dots,\varphi_r,f,\psi}(p_1,\dots,p_r) \le \left(1- \frac{1}{m}\sum_{i=1}^r \frac{1}{p_i}\right)I$
  holds.
\end{thm}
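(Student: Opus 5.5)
The plan is to derive the statement from a Skoda-type $L^2$ division theorem with several weights, in the spirit of Guan's proof of Theorem \ref{Guan's sharp effective openness theorem}. There, the bound $(1-1/p)I$ arises from a minimal-integral argument along sublevel sets of $\varphi$. Here we instead write $f=g+\sum_i h_i$ with $h_i\in\Gamma(\Omega,\cI(p_i\varphi_i+\psi))$. We build the $h_i$ by an $L^2$ $\dbar$-argument with the weight $\psi+\sum_i\varphi_i$ twisted by suitable functions of the $\varphi_i$, and $g$ is what remains.

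\textbf{Step 1: the weight.} Truncate each $\varphi_i$ at level $t$: use cut-offs $v_{t}(\varphi_i)$, or the sets $\{\varphi_i<-t\}$, on which $\varphi_i$ is very negative. On $\{\varphi_i<-t\}$ the weight $e^{-p_i\varphi_i}$ is controlled by $e^{-\varphi_i}e^{(p_i-1)t}$, up to a factor we must absorb. Following Guan--Zhou, we then set
\[
g_t \;=\; f\Bigl(1-\prod_{i}\bigl(1-b_{t}(\varphi_i)\bigr)\Bigr)-u_t ,
\]
or rather an additive partition adapted to the ideal sum. Here $b_t$ is a smooth cut-off equal to $1$ on $\{\varphi_i<-t-1\}$, and $u_t$ solves $\dbar u_t=\dbar(\cdot)$ with $L^2$ estimates.

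\textbf{Step 2: the division theorem.} The key tool is a Skoda-type statement. We treat $f$ as a section to be divided by the formal tuple $(e^{\varphi_1/2},\dots,e^{\varphi_r/2})$, or equivalently we solve $\dbar$ on $\Omega\times\C^r$ with the weight $\psi+\sum_i p_i\varphi_i$ restricted to product domains. Skoda's estimate for $r$ generators in dimension $n$ gives a constant that depends only on $m=\min(n,r)$, since the Koszul complex is truncated at length $\min(n,r)$. With exponent $\alpha$ the sharp Skoda constant is $\alpha/(\alpha-m)$-type. Choosing the auxiliary function optimally, as in Guan's concavity of minimal integrals, should turn $\sum_i 1/p_i$ divided by $m$ into the factor $1-\frac1m\sum_i 1/p_i$. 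This is exactly the form expected when one averages $m$ "directions", each contributing a $1/p_i$ deficit. We aim for the estimate $\int|g|^2e^{-\psi}\le\bigl(1-\frac1m\sum_i 1/p_i\bigr)\int|f|^2e^{-\psi-\sum\varphi_i}$, where the $h_i$ only need finiteness of $\int|h_i|^2e^{-\psi-p_i\varphi_i}$ near each point. That finiteness follows because the $L^2$ estimate is with respect to a weight that dominates $p_i\varphi_i$ locally.

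\textbf{Step 3: limit.} Let the truncation parameters tend to infinity, extract weakly convergent subsequences using Montel's theorem on the $L^2(e^{-\psi})$-bounded families, and check that the limits of the $h_i$ still lie in $\Gamma(\Omega,\cI(p_i\varphi_i+\psi))$. This holds by Fatou's lemma on the local weighted integrals together with coherence and closedness of the ideal sheaves. Pseudoconvexity lets us exhaust $\Omega$ by bounded smooth strongly pseudoconvex domains and approximate each $\varphi_i$ by smooth decreasing psh functions.

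The main obstacle is Step 2: obtaining the \emph{sharp} constant $1-\frac1m\sum 1/p_i$ from the Skoda/Koszul estimate. The constant must come from optimizing an ODE for the twisting functions, as in Guan's concavity argument, but now in $r$ variables coupled through the Skoda curvature term with rank bound $m$. I would first try the ansatz in which the twisting depends only on $\sum_i\varphi_i/p_i$, reducing the problem to Guan's one-variable ODE with effective exponent $p$ given by $1/p=\frac1m\sum_i 1/p_i$, which is less than $1$ by hypothesis.
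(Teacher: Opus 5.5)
Your proposal has a genuine gap exactly where you place the ``main obstacle'': Step~2 never produces the constant. Saying that choosing the auxiliary function optimally ``should turn'' $\frac1m\sum_i 1/p_i$ into $1-\frac1m\sum_i 1/p_i$ is a hope, not an argument. Nothing in Steps~1 and~3 (cut-offs $b_t(\varphi_i)$, a product partition, a $\dbar$-correction) shows how an $m$-dependent sharp constant would emerge. Your account of where $m$ comes from is also off by one: for $r$ weights, Theorem~\ref{sharp L^2-division theorem} has $q=\min(n,r-1)$. In the paper, $m=\min(n,r)$ arises because the division theorem is applied to $r+1$ weights, the extra constant weight carrying the remainder $g$. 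Concretely, for each $t\in\mathbb{R}^r_{>0}$ one divides $f$ with respect to
\[
\Phi_i=\log\frac{t_i}{1+\sum_j t_j}+p_i\varphi_i\quad(1\le i\le r),\qquad \Phi_{r+1}=-\log\Bigl(1+\sum_j t_j\Bigr).
\]
These satisfy $\sum_i e^{\Phi_i}<1$ because $\varphi_i<0$. Taking $g=h_{r+1}$ gives
\[
c_{\varphi_1,\dots,\varphi_r,f,\psi}(p_1,\dots,p_r)\le\Bigl(1+\sum_j t_j\Bigr)^{m}\int_\Omega\frac{|f|^2e^{-\psi}}{\bigl(1+\sum_i t_ie^{p_i\varphi_i}\bigr)^{m+1}}.
\]
The sharp constant then comes from averaging, not from an ODE. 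Put $t_i=x_i^{p_i}$ and integrate over $x\in\mathbb{R}^r_{>0}$. By Tonelli and the pointwise substitution $y_i=x_ie^{\varphi_i}$, the right side becomes $\theta_{m+1}I$ and the left side is $\theta_m\,c$. The Beta-integral identity $\theta_{m+1}/\theta_m=1-\frac1m\sum_i 1/p_i$, valid since $\sum_i 1/p_i<m$, finishes the proof. No cut-offs and no limit in truncation parameters are needed.

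Your closing ansatz does not close the gap either. As written, with the twisting depending on $\sum_i\varphi_i/p_i$, it does not involve the weight $e^{-\sum_i\varphi_i}$ that defines $I$. Its natural correction is to apply Guan's one-weight theorem to $\varphi=\sum_i\varphi_i$ with $p=m/\sum_j(1/p_j)$. You would then pass from $\cI(p\varphi+\psi)$ to $\sum_i\cI(p_i\varphi_i+\psi)$ via weighted AM--GM, $e^{\sum_i\varphi_i/\sum_j(1/p_j)}\le\sum_i e^{p_i\varphi_i}$, together with Theorem~\ref{sharp L^2-division theorem}. This needs a version of Guan's theorem with the extra weight $\psi$ and global admissibility, which is not Theorem~\ref{Guan's sharp effective openness theorem} as stated. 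Moreover, with $r$ weights the available division exponent is only $q+1=\min(n+1,r)$, so this route yields $1-\frac{1}{\min(n+1,r)}\sum_i 1/p_i$. That matches the claim when $r\le n$ but is strictly weaker when $r>n$.
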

We first make a minor remark concerning the definition of the minimal integral. In Guan's formulation, the congruence condition is stated at the level of the germ at the origin. More precisely, the admissible holomorphic functions satisfy $(f-g,0)\in \mathcal I(p\varphi)_0$. In Theorem~\ref{main theorem}, by contrast, the admissible function is required to satisfy the global condition
\[
f-g\in \sum_{i=1}^r
\Gamma\bigl(\Omega,\mathcal I(p_i\varphi_i+\psi)\bigr).
\]
However, Guan's proof also yields a function satisfying this global condition. Therefore, the global formulation in Theorem~\ref{main theorem} is useful for our purposes, but it should not be regarded as a new point.

To prove the main theorem, we use the following generalized $L^2$ division theorem for multiplier ideal sheaves.
\begin{thm}[cf. Theorem 6.1 in \cite{M.T}]\label{sharp L^2-division theorem}
  Let $\Omega$ be a pseudoconvex domain in $\mathbb{C}^n$, $\psi$ be a  plurisubharmonic function and $(\varphi_1,\dots,\varphi_r)$ be a tuple of plurisubharmonic functions on $\Omega$ satisfying $\sum_{i=1}^r e^{\varphi_i} <1.$ We set $q = \min(n,r-1)$. For any holomorphic function $f \in \cO(\Omega)$ such that
  $\int_{\Omega} \frac{\abs{f}^2 e^{-\psi}}{(\sum_{i=1}^r e^{\varphi_i})^{q+1}} < +\infty,$
  there exist holomorphic functions $(h_1,\dots,h_r)$ such that
  $$ \sum_{i=1}^r h_i =f\;\;\;\text{and}$$
  $$\int_{\Omega} \sum_{i=1}^{r}\abs{h_i}^2 e^{-\varphi_i-\psi} \le \int_{\Omega} \frac{\abs{f}^2 e^{-\psi}}{(\sum_{i=1}^r e^{\varphi_i})^{q+1}}.$$
  This estimate implies the following inclusion of multiplier ideal sheaves:
$
\mathcal{I}\left( (q+1) \log \left( \sum_i e^{\varphi_i} \right) \right) \subset \sum_{i=1}^r \mathcal{I}(\varphi_i).
$
  \end{thm}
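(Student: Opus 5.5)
The plan is to follow Skoda's original strategy: solve the division problem pointwise in the smooth category, then correct the pointwise solution holomorphically with a twisted $\dbar$-estimate on the kernel bundle of the summation map.

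\textbf{Reduction to the smooth case.} Exhaust $\Omega$ by relatively compact strongly pseudoconvex domains $\Omega_\nu$. On a neighbourhood of $\overline{\Omega_\nu}$, approximate $\psi$ and each $\varphi_i$ by smooth plurisubharmonic functions $\psi^{(k)}\downarrow\psi$ and $\varphi_i^{(k)}\downarrow\varphi_i$ (standard convolution), shifting slightly so that $\sum e^{\varphi_i^{(k)}}<1$ still holds on $\Omega_\nu$. Because the functions decrease, the weights $e^{-\varphi_i^{(k)}-\psi^{(k)}}$ increase in $k$. If we prove the estimate with the right-hand side computed for the $k$-th weights, that right-hand side is bounded by a quantity that tends to the original one by monotone convergence; note $\bigl(\sum e^{\varphi_i^{(k)}}\bigr)^{-(q+1)}$ increases to $\bigl(\sum e^{\varphi_i}\bigr)^{-(q+1)}$. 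The solutions $(h_i^{(k)})$ are then uniformly bounded in $L^2$ on compacts, since $\varphi,\psi$ are locally bounded above. A weak limit, taken first in $k$ and then diagonally in $\nu$, gives holomorphic $h_i$ with $\sum h_i=f$. The estimate passes to the limit by lower semicontinuity together with monotonicity of the weights in $k$. So it suffices to treat smooth $\varphi_i,\psi$ on a bounded strongly pseudoconvex domain.

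\textbf{The smooth case.} Consider the trivial bundle $E=\Omega\times\C^r$ with the diagonal metric $H=\mathrm{diag}(e^{-\varphi_i-\psi})$, and the surjection $\sigma:E\to\Omega\times\C$, $\sigma(h)=\sum h_i$. The $H$-minimal pointwise solution of $\sigma(h)=f$ is
$$h^0_i=\frac{f\,e^{\varphi_i}}{\sum_j e^{\varphi_j}},\qquad \sum_i\abs{h^0_i}^2e^{-\varphi_i-\psi}=\frac{\abs{f}^2e^{-\psi}}{\sum_j e^{\varphi_j}}.$$
We look for $h=h^0-u$ with $u$ a section of $S=\ker\sigma$ solving $\dbar u=\dbar h^0$. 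Note that $\dbar h^0$ takes values in $S$ and equals $f$ times the second fundamental form of $S\subset E$ applied to the $H$-orthogonal lift of $1$. We solve this with the twisted $L^2$ estimate (Hörmander/Ohsawa--Takegoshi type) on $S$, using the extra weight $(q+1)\log\sum_j e^{\varphi_j}$ and possibly an auxiliary twist $\eta$ as in Skoda's proof, to obtain
$$\int \abs{u}_H^2\le \int\frac{\abs{f}^2e^{-\psi}}{(\sum e^{\varphi_j})^{q+1}}-\int\frac{\abs{f}^2e^{-\psi}}{\sum e^{\varphi_j}}.$$
Finally, $h^0$ is $H$-orthogonal to $S$ pointwise, so $\abs{h}_H^2=\abs{h^0}_H^2+\abs{u}_H^2$. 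Adding the two contributions gives exactly the claimed bound for $\int\sum\abs{h_i}^2e^{-\varphi_i-\psi}$.

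\textbf{Main obstacle.} The crux is the curvature inequality, i.e.\ Skoda's lemma adapted to a diagonal metric that is not given by holomorphic generators. Concretely, one needs
$$\i\Theta(S)+(q+1)\,\dd\log\sum e^{\varphi_j}\ \ge\ \text{(the term from the second fundamental form)},$$
in the sense of Nakano or of the twisted Bochner--Kodaira identity, and this is where the exponent $q+1$ with $q=\min(n,r-1)$ should enter. The rank of $S$ is $r-1$, and a $(0,1)$-form involves only $n$ directions, so Skoda's Cauchy--Schwarz argument loses a factor $\min(n,r-1)$. I would first compute directly $\dd\log\sum e^{\varphi_j}$, which is a convex combination of the $\dd\varphi_j$ plus a nonnegative term $\sum_{j<k}\frac{e^{\varphi_j+\varphi_k}}{(\sum e^{\varphi})^2}\i\partial(\varphi_j-\varphi_k)\wedge\dbar(\varphi_j-\varphi_k)$. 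This second term is exactly what controls $\abs{\dbar h^0}^2$. Should this direct computation fail, I would fall back on a Hartogs-domain trick: realize $e^{\varphi_i}$ as $\abs{w_i}^2$ on a pseudoconvex domain in $\Omega\times\C^r$, apply the classical holomorphic Skoda theorem there, and integrate out the fibre variables.

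The final inclusion of multiplier ideal sheaves then follows immediately by applying the estimate on small balls.
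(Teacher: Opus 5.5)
Your architecture is the paper's: the trivial bundle with the diagonal metric, the minimal lift $h^0=g^*f$ with $\abs{h^0}_H^2=\abs{f}^2e^{-\varphi-\psi}$ (where $\varphi:=\log\sum_j e^{\varphi_j}$), a correction $u$ in $S=\ker g$, pointwise orthogonality, and the same target bound $\int\abs{u}_H^2\le\int(1-e^{q\varphi})\abs{f}^2e^{-(q+1)\varphi-\psi}$. Your approximation step is also fine. But that bound is the whole content of the theorem, and you only assert it. Two ingredients are missing.

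\textbf{The curvature of $S$.} The induced metric $h_S$ is in general not Nakano semi-positive, because the second fundamental form lowers the curvature of a subbundle. What is needed is that $e^{-q\varphi}h_S$, i.e.\ $S\otimes(\det Q)^{\otimes q}$, is Nakano semi-positive with $q=\min(n,r-1)$ (Theorem~\ref{Nakano positivity of sub bundle and quotient bundle}). Your variance formula $\sqrt{-1}\partial\bar\partial\varphi=\sum_j p_j\sqrt{-1}\partial\bar\partial\varphi_j+\sum_j p_j\sqrt{-1}\partial(\varphi_j-\varphi)\wedge\bar\partial(\varphi_j-\varphi)$, with $p_j=e^{\varphi_j-\varphi}$, does not by itself give this. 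What it does give, via a rank argument using $\sum_j p_j\bar\partial(\varphi_j-\varphi)=0$, is the other ingredient: Skoda's inequality $\abs{\bar\partial h^0}^2_{H,\sqrt{-1}\partial\bar\partial\varphi}\le q\abs{h^0}_H^2$ (Lemma~\ref{Skoda trick}).

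\textbf{A sharp twisted estimate.} Decisively, you need an $L^2$ estimate whose constants combine to exactly $1$. The paper gets it from Theorem~\ref{a Priori}. The twist $T=-\log\frac{1-e^{\alpha x}}{\alpha}$, $\eta=Ue^{T}$, $\lambda=-\eta'/T'$ solves an ODE system that makes the twisted curvature operator equal to $\sqrt{-1}\eta\Theta_{h_E}+\sqrt{-1}\partial\bar\partial\varphi\otimes\mathrm{Id}$. This yields $\int e^{\alpha\varphi}\abs{u}^2\le\int\frac{1-e^{\alpha\varphi}}{\alpha}\abs{\beta}^2_{\sqrt{-1}\partial\bar\partial\varphi}$ for negative $\varphi$. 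Apply it with $\alpha=q$ on $(S,e^{-q\varphi-\psi}h_S)$. The factor $e^{q\varphi}$ removes the twist on the left, $1/q$ cancels the $q$ from Skoda's inequality, and you land exactly on your target.

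So the correct split is $q\,\sqrt{-1}\partial\bar\partial\varphi$ to make $S$ Nakano semi-positive, plus one more $\sqrt{-1}\partial\bar\partial\varphi$ supplied by the twist as the metric $\omega$, not by an extra weight. This twist is also where the hypothesis $\sum_i e^{\varphi_i}<1$ (i.e.\ $\varphi<0$) is needed. Your smooth-case argument never uses that hypothesis, which signals that the mechanism is absent.

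The tools you name do not substitute for this. An untwisted H\"ormander estimate for the metric $e^{-(q+s)\varphi-\psi}h_S$, $s>0$ (your weight $(q+1)\log\sum_je^{\varphi_j}$ is the case $s=1$), only gives $\int\abs{u}^2_H\le\frac{q}{s}\int\abs{f}^2e^{-(q+1+s)\varphi-\psi}$. Skoda-type twists give the constant $\frac{\alpha}{\alpha-1}$ at exponent $\alpha q+1$, $\alpha>1$. Neither reaches exponent $q+1$ with constant $1$. The Hartogs fallback is worse still: in $\Omega\times\C^r$ the dimension is $n+r$, so the classical Skoda exponent is governed by $\min(n+r,r-1)=r-1$. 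That loses the $\min(n,\cdot)$ as well as sharpness.
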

  When $\varphi$ admits a decomposition $\varphi=\sum \varphi_i$, Theorem~\ref{main theorem} yields a more refined estimate than Theorem~\ref{Guan's sharp effective openness theorem}. In particular, it suggests the possibility of a more precise analysis of the plurisubharmonic functions. Moreover, this approach has the advantage of simplicity of its proof, as it avoids the technical arguments involving  the construction of cut-off functions that are commonly used in proofs of the openness conjecture.

The proof of Theorem \ref{main theorem} is inspired by the arguments in \cite{A25}. In  \cite{A25}, Albesiano gave a new proof of the $L^2$-extension theorem using the $L^2$-division theorem. From this perspective, one may conclude that the $L^2$-division theorem is a comprehensive result leading to a wide range of applications.

In Section \ref{relation}, we will discuss the relationships among the $L^2$-division theorem, the $L^2$-extension theorem, and the strong openness theorem.

\section{Sharp $L^2$-division theorem for multiplier ideal sheaves}\label{Sharp L^2-division theorem for multiplier ideal sheaves}
The aim of this section is to recall the statement and the proof of the  generalized $L^2$
 division theorem (= Theorem \ref{sharp L^2-division theorem}) established in \cite{M.T}.
A proof of this result was given in \cite{M.T}, but we present a slightly simplified argument in this section.

\subsection{Nakano positivity of Chern curvature}
We begin by recalling the basic notion of curvature positivity, namely the Nakano positivity of Chern curvature.
Let \( E \to X \) be a holomorphic vector bundle of rank \( r \) over an \( n \)-dimensional complex manifold \( X \), and let \( h \) be a smooth Hermitian metric on \( E \). The Chern curvature of \( h \) is given by
\[
\Theta_h = \bar{\partial}(\partial h \cdot h^{-1}),
\]
which is a \((1,1)\)-form with values in \( \mathrm{End}(E) \).

\begin{dfn}
We say that \( h \) is \emph{Nakano (semi)-positive} if the associated Hermitian form on \( E \otimes T_X \) defined by \( h \otimes (\sqrt{-1}\Theta_h) \) is (semi)-positive definite.
\end{dfn}

We now explain how Nakano positivity may be transferred to a subbundle or a quotient bundle. Let \( S \subset E \) be a holomorphic subbundle of rank \( r_S \), and let \( Q = E/S \) be the quotient bundle. The metric \( h \) induces Hermitian metrics \( h_S \) and \( h_Q \) on \( S \) and \( Q \), respectively. Let \( q = \min(n, r_S) \).

\begin{thm}[Theorem 11.18 in (\cite{Demailly lec})]\label{Nakano positivity of sub bundle and quotient bundle}
If $h$ is Nakano semi-positive, then the
Hermitian metrics on $\det Q$ and \( S \otimes \det(Q)^{\otimes q} \) induced by \(h_Q\) and \(h_S\), respectively, are Nakano semi-positive.
\end{thm}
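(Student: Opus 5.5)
The plan is to argue pointwise with the second fundamental form and reduce everything to a linear-algebra inequality on $T_X\otimes S$. Fix a point $x\in X$, choose a local $C^\infty$ orthogonal splitting $E\simeq S\oplus Q$ given by $h$, and let $\beta\in C^\infty(\Lambda^{1,0}T_X^*\otimes \mathrm{Hom}(S,Q))$ be the second fundamental form of $S$ in $E$. The standard formulas (Gauss–Codazzi type, with Demailly's sign conventions) give
\[
\imaginary\Theta_{h_S}=\imaginary\Theta_h|_{S}-\imaginary\beta^*\wedge\beta,\qquad
\imaginary\Theta_{h_Q}=\imaginary\Theta_h|_{Q}+\imaginary\beta\wedge\beta^*,
\]
where the added terms are semi-positive for $Q$ and semi-negative for $S$, in the sense of Griffiths. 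Taking traces gives
\[
\imaginary\Theta(\det Q)=\mathrm{Tr}_Q(\imaginary\Theta_h|_Q)+\mathrm{Tr}(\imaginary\beta\wedge\beta^*).
\]

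\textbf{The case of $\det Q$.} Nakano semi-positivity of $h$ implies Griffiths semi-positivity of $h|_Q$, since decomposable tensors $\xi\otimes e$ with $e\in Q$ are among the tensors tested. Hence the first term above is a semi-positive $(1,1)$-form. The second term equals $\sum_{j,k}\sum_{\lambda,\mu}\beta_{j\lambda\mu}\overline{\beta_{k\lambda\mu}}\,\imaginary dz_j\wedge d\bar z_k$, which is semi-positive in local orthonormal frames. Since $\det Q$ is a line bundle, Nakano and Griffiths positivity coincide, and the claim for $\det Q$ follows.

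\textbf{The case of $S\otimes\det(Q)^{\otimes q}$.} Its curvature is
\[
\imaginary\Theta_h|_S+q\,\mathrm{Tr}(\imaginary\Theta_h|_Q)\otimes\mathrm{Id}_S+\bigl(q\,\mathrm{Tr}(\imaginary\beta\wedge\beta^*)\otimes\mathrm{Id}_S-\imaginary\beta^*\wedge\beta\bigr).
\]
The first term is Nakano semi-positive because it is the restriction of a Nakano semi-positive form to $T_X\otimes S\subset T_X\otimes E$. The second term is semi-positive by the $\det Q$ argument. It therefore remains to show that the quadratic form of the bracket on $u=\sum_{j,\lambda}u_{j\lambda}\,\partial/\partial z_j\otimes e_\lambda\in T_X\otimes S$ is non-negative. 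In orthonormal frames this amounts to the inequality
\[
\sum_{\mu}\Bigl|\sum_{j,\lambda}\beta_{j\lambda\mu}u_{j\lambda}\Bigr|^2\le q\sum_{j,\lambda,\mu}|\beta_{j\lambda\mu}|^2\sum_{k,\nu}|u_{k\nu}|^2 .
\]
The correct index pairing has to be checked against the sign conventions, but this is the shape of the inequality.

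\textbf{Main obstacle.} The key step is this inequality with the sharp constant $q=\min(n,r_S)$; naive Cauchy–Schwarz only yields the constant $nr_S$. To get $q$, I would use the singular value decomposition of $u$, regarded as an $n\times r_S$ matrix. After unitary changes of frames in $T_X$ and in $S$, which preserve both sides, we may write $u=\sum_{k=1}^{q}c_k\,\partial/\partial z_k\otimes e_k$ with at most $q$ nonzero singular values. Then
\[
\sum_{\mu}\Bigl|\sum_k c_k\beta_{kk\mu}\Bigr|^2\le \Bigl(\sum_{k\le q}|c_k|^2\Bigr)\sum_\mu\sum_{k\le q}|\beta_{kk\mu}|^2
\]
by Cauchy–Schwarz over at most $q$ indices. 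This is bounded by $|u|^2\sum|\beta|^2$, which is even better than the constant $q$. I expect the real difficulty to be tracking the index pairing of $\beta$ (which of $j,k$ is contracted in $\beta^*\wedge\beta$) so that Cauchy–Schwarz is applied over a sum of length at most $q$. I would verify this carefully, following the computation for Theorem~11.18 in \cite{Demailly lec}, where the constant $q$ arises precisely this way. Summing the three semi-positive contributions then proves the theorem.
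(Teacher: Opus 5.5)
The paper does not prove this statement; it only cites Demailly. Your skeleton (second fundamental form, then a pointwise inequality on $T_X\otimes S$) is the standard one, and your treatment of $\det Q$ and of the first two curvature terms of $S\otimes\det(Q)^{\otimes q}$ is fine. The gap is in reducing the bracket to your displayed inequality, and that is where the content of the theorem lies.

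For a $(1,1)$-form $\gamma$, the form $\gamma\otimes\mathrm{Id}_S$ evaluated on $u$ is $\sum_\nu\gamma(u_{\cdot\nu},u_{\cdot\nu})$, where $u_{\cdot\nu}=\sum_j u_{j\nu}\,\partial/\partial z_j$. Also $\mathrm{Tr}(\sqrt{-1}\beta\wedge\beta^*)(\xi,\xi)=\sum_{\lambda,\mu}\bigl|\sum_j\xi_j\beta_{j\lambda\mu}\bigr|^2$. So the inequality you actually need is
\[
\sum_\mu\Bigl|\sum_{j,\lambda}\beta_{j\lambda\mu}u_{j\lambda}\Bigr|^2\le q\sum_{\mu}\sum_{\lambda,\nu}\Bigl|\sum_j\beta_{j\lambda\mu}u_{j\nu}\Bigr|^2 ,
\]
with the $T_X$-index contracted inside the modulus. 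Your right-hand side $q\sum|\beta_{j\lambda\mu}|^2\,|u|^2$ replaces the form $\mathrm{Tr}(\sqrt{-1}\beta\wedge\beta^*)$ by its trace times $\omega$, which is larger. Your inequality therefore does not imply the needed one, and this is not merely an index-pairing issue.

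The warning sign is that your inequality holds with constant $1$ by plain Cauchy--Schwarz (not $nr_S$). Through your reduction, that would make $S\otimes\det Q$ itself Nakano semi-positive, which is false. Take flat $E=\Omega\times\C^3$ with $\Omega\subset\C^2$ and $g=(1,z_1,z_2)$. At $z=0$, in the orthonormal frame $(e_2,e_3)$ of $S_0$, one finds $\beta=-(dz_1\otimes e_2^*+dz_2\otimes e_3^*)\otimes e_Q$. For $u=\partial/\partial z_1\otimes e_2+\partial/\partial z_2\otimes e_3$, the curvature of $S\otimes\det(Q)^{\otimes c}$ on $u$ equals $2c-4$, so $c=q=2$ is really needed.

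The repair is to use the rank. Set $(M_\mu)_{\lambda\nu}:=\sum_j\beta_{j\lambda\mu}u_{j\nu}$; this is an $r_S\times r_S$ matrix that factors through $\C^n$, so its rank is at most $q$. The needed inequality reads $\sum_\mu|\mathrm{Tr}\,M_\mu|^2\le q\sum_\mu\|M_\mu\|^2_{\mathrm{HS}}$. This follows from $|\mathrm{Tr}\,M|=|\langle M,P\rangle_{\mathrm{HS}}|\le\|P\|_{\mathrm{HS}}\|M\|_{\mathrm{HS}}=\sqrt{\mathrm{rank}\,M}\,\|M\|_{\mathrm{HS}}$, where $P$ is the orthogonal projection onto the image of $M$.

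Your singular value decomposition also works, provided you apply Cauchy--Schwarz in the form $\bigl|\sum_{k\le q}a_k\bigr|^2\le q\sum_k|a_k|^2$ with $a_k=c_k\beta_{kk\mu}$. This gives $q\sum_{k,\mu}|c_k|^2|\beta_{kk\mu}|^2$, which is exactly the $\lambda=\nu$ part of the correct right-hand side $q\sum_{\mu,\lambda,\nu}|c_\nu|^2|\beta_{\nu\lambda\mu}|^2$. Splitting into $\bigl(\sum|c_k|^2\bigr)\bigl(\sum|\beta_{kk\mu}|^2\bigr)$, as you did, produces a bound that the correct right-hand side does not dominate.
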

Next, we present a key inequality that plays an essential role in solving the \(\bar{\partial}\)-equation with $L^2$-estimates in a subbundle.
Let \( f \) be a holomorphic section of \( Q \). Then, using the following commutative diagram, we can define a (not necessarily holomorphic) section \( g^* f \) of \( E \):

\[
\begin{CD}
    E @>{g}>> Q\\
    @AAA @VVV\\
    E^\vee @<{g^\vee}<< Q^\vee
\end{CD}\]

Here, the vertical maps are the conjugate-linear Riesz isomorphisms induced by the Hermitian metrics \( h \) and \( h_Q \), respectively. At this point, we observe that the \((0,1)\)-form \(\bar{\partial} g^* f\) takes values in the subbundle \(S \subset E\) and is \(\bar{\partial}\)-closed.
This is a consequence of the holomorphicity of the quotient map \( g : E \to Q \) and the chain rule. Indeed, since \( f \in \Gamma(X, Q) \) is a holomorphic section, we have:
\[
0 = \bar{\partial} f = \bar{\partial}(g \circ g^* f) = g (\bar{\partial}(g^* f)).
\]

Now, we assume that the Hermitian metric \( h \) is Nakano positive.
We further observe that the Chern curvature form \( \sqrt{-1}\Theta_{\det Q} \) is a positive \((1,1)\)-form. Therefore, it defines a Hermitian metric on the holomorphic tangent bundle \( T_X \).
For $f \in \Gamma(X,Q)$ and $q = \min(n,r_S)$, we will use the following inequality.

\begin{lem}[Theorem 12.3 in \cite{Demailly lec}]\label{Skoda trick}
Let \( g^* f \in \Gamma(X, E) \) be the section associated with a holomorphic section \( f \in \Gamma(X, Q) \). Then the following inequality holds:
\[
| \bar{\partial} g^* f |^2_{h_S, \sqrt{-1}\Theta_{\det Q}} \leq q \, | g^* f |^2,
\]
where the norm on the left-hand side is taken with respect to the Hermitian metric introduced by $h_S$ and $\sqrt{-1}\Theta_{\det Q}$.
\end{lem}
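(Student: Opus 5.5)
The plan is to prove the inequality pointwise, as a linear-algebra statement at each point $x\in X$, after expressing $\bar\partial g^*f$ through the second fundamental form of $S\subset E$.

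\textbf{Second fundamental form.} Use $h$ to identify $Q$ with the orthogonal complement $S^\perp$, and write the Chern connection of $E$ as
\[
D_E=\begin{pmatrix} D_S & -\beta^*\\ \beta & D_Q\end{pmatrix},
\]
where $\beta\in C^\infty(\Lambda^{1,0}T^*_X\otimes \mathrm{Hom}(S,Q))$ is the second fundamental form. Under this identification $g^*f$ is just $f$ viewed as a section of $S^\perp\subset E$. Since $f$ is holomorphic in $Q$, the $(0,1)$-part of $D_E(g^*f)$ has vanishing $Q$-component, as already observed, and its $S$-component is
\[
\bar\partial(g^*f)=-\beta^*f .
\]
The standard curvature formula for quotients gives $\i\Theta_Q=\i\Theta_E|_Q+\i\beta\wedge\beta^*$. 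Taking traces,
\[
\omega:=\i\Theta_{\det Q}=\mathrm{Tr}_Q(\i\Theta_E|_Q)+\theta,\qquad \theta:=\mathrm{Tr}_Q(\i\beta\wedge\beta^*).
\]
For $v\in T_{X,x}$ one has $\theta(v,\bar v)=\|\beta(v)\|^2_{HS}$. Nakano semi-positivity of $h$ implies Griffiths semi-positivity, hence $\mathrm{Tr}_Q(\i\Theta_E|_Q)\geq 0$ and so $\omega\geq\theta$. The norm of an $S$-valued $(0,1)$-form measured with a larger Kähler form is smaller, so it suffices to prove
\[
|\beta^*f|^2_{h_S,\theta}\leq q\,|f|^2 .
\]
If $\theta$ is degenerate, I will replace it by $\theta+\varepsilon\omega$ (or argue on the image of $v\mapsto\beta(v)$) and let $\varepsilon\to0$. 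This is harmless because $\omega$ is assumed positive.

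\textbf{Linear algebra at a point.} Fix $x$ and choose a basis $e_1,\dots,e_n$ of $T_{X,x}$ orthonormal for $\theta$. Put $\beta_j:=\beta(e_j)\in\mathrm{Hom}(S_x,Q_x)$. Then the $\beta_j$ are orthonormal for the Hilbert–Schmidt inner product, and
\[
|\beta^*f|^2_\theta=\sum_{j=1}^n|\beta_j^*f|^2 .
\]
There are two bounds to prove.
\begin{itemize}
\item \emph{Bound by $n$.} Each $\beta_j$ has operator norm at most its Hilbert–Schmidt norm, which is $1$. Hence $|\beta_j^*f|^2\leq|f|^2$, and summing over $j$ gives $|\beta^*f|^2_\theta\leq n|f|^2$.
\item \emph{Bound by $r_S$.} Take an orthonormal basis $s_1,\dots,s_{r_S}$ of $S_x$. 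Then
\[
\sum_j|\beta_j^*f|^2=\sum_{k}\sum_j|\langle f,\beta_js_k\rangle|^2=\sum_k\sum_j|\langle f\otimes s_k^*,\beta_j\rangle_{HS}|^2 .
\]
By Bessel's inequality for the orthonormal family $(\beta_j)$, the inner sum is at most $|f\otimes s_k^*|^2_{HS}=|f|^2$. Summing over $k$ gives $|\beta^*f|^2_\theta\leq r_S|f|^2$.
\end{itemize}
Together these give the bound $\min(n,r_S)\,|f|^2=q\,|g^*f|^2$.

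\textbf{Main obstacle.} The delicate step is setting up signs and conventions correctly: $\bar\partial g^*f=-\beta^*f$, the sign of $\i\beta\wedge\beta^*$ in $\i\Theta_Q$, and the fact that the dual norm on $(0,1)$-forms decreases as the metric on $T_X$ increases. A further point is making rigorous the reduction from $\omega$ to the possibly degenerate form $\theta$. Once these are settled, the pointwise Bessel/Cauchy–Schwarz estimate above is elementary.
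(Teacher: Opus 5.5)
Your proposal is correct and follows the standard proof of the cited result; the paper quotes it from Demailly without reproducing the argument. That proof has the same three steps as yours: $\bar\partial g^*f=-\beta^*f$ via the second fundamental form, $\sqrt{-1}\Theta_{\det Q}\geq \mathrm{Tr}_Q(\sqrt{-1}\beta\wedge\beta^*)$ by Griffiths semi-positivity, and a pointwise estimate where the operator-norm bound gives $n$ and Bessel over an orthonormal frame of $S$ gives $r_S$. For the degenerate case, perturbing to $\theta+\varepsilon\omega$ breaks your orthonormality of the $\beta_j$; instead take an $\omega$-orthonormal basis diagonalizing $\theta$, so that since $\theta\leq\omega$ the $\beta_j$ are Hilbert--Schmidt orthogonal with $\|\beta_j\|_{HS}\leq 1$ and both bounds hold directly for the $\omega$-norm without any limit.
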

\subsection{Twisted $L^2$-existence theorem}
In this subsection, we introduce the optimal twisted \(L^2\)-existence theorem for \(\bar{\partial}\)-equations. This theorem provides a sharp estimate under Nakano positivity assumptions and will serve as a fundamental tool in the proof of Theorem \ref{sharp L^2-division theorem}.

First, we recall the following twisted version of Hörmander’s \( L^2 \) existence theorem.
\begin{lem}[Theorem 13.4 in \cite{Demailly lec}]\label{twisted L^2-existence theorem} Let $\Omega \subset \C^n$ be a pseudoconvex domain, $(E,h_E)$ be a Hermitian vector bundle of rank $r_E$, $\omega$ be a continuous Hermitian metric. Let $\eta$ be a positive $\cC^2$ function and $\lambda$ be a continuous positive function. Assume that for the operator $B = \i \eta \Theta_E - (\dd \eta \otimes Id_E +\partial \eta \wedge \dbar \eta/\lambda \otimes Id_E)$,  the induced Hermitian form $h_E \otimes B - h_E \otimes \omega$ is positive definite on $E\otimes T\Omega$. Then for any $E$-valued $\dbar$-closed $(0,1)$-form $\beta$ such that
  \begin{equation}
      \int_{\Omega} \abs{\beta}^2_{h_E,\omega}  < +\infty,
  \end{equation}
there exists an $E$-valued measurable section $u$ such that
\begin{gather}
  \dbar u = \beta\;\text{in the sense of distributions}\\
  \int_{\Omega} \frac{\abs{u}_{h_E} ^2} {(\eta + \lambda)} \le \int_{\Omega} \abs{\beta}^2_{h_E,\omega}.
\end{gather}

\end{lem}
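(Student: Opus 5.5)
The plan is the standard Ohsawa--Takegoshi/Demailly twisted Bochner--Kodaira argument. Work with the $\bar\partial$-operator on $E$-valued $(0,1)$-forms, where $E$ is twisted by $h_E$. First reduce to the case where $\Omega$ is replaced by a smoothly bounded strongly pseudoconvex relatively compact subdomain $\Omega_\nu \Subset \Omega$, with $\Omega_\nu \nearrow \Omega$. We may also assume that $h_E$, $\eta$, $\lambda$ and $\omega$ are smooth up to the boundary; continuity of $\lambda$ and $\omega$ can be handled by approximation.

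The core step is the twisted a priori estimate. For $\alpha$ a smooth $E$-valued $(0,1)$-form in $\mathrm{Dom}(\bar\partial^*)$, the twisted Bochner--Kodaira--Nakano identity (Ohsawa--Takegoshi twist by $\eta$) gives
\[
\|\sqrt{\eta}\,\bar\partial^*\alpha\|^2+\|\sqrt{\eta}\,\bar\partial\alpha\|^2 \ \ge\ \int \langle [\sqrt{-1}\eta\Theta_E-\sqrt{-1}\partial\bar\partial\eta,\Lambda]\alpha,\alpha\rangle - 2\,\mathrm{Re}\int\langle \bar\partial^*\alpha, \partial\eta\lrcorner\alpha\rangle .
\]
The boundary term is nonnegative by pseudoconvexity. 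The cross term is estimated via Cauchy--Schwarz by $\|\sqrt{\lambda}\,\bar\partial^*\alpha\|^2+\int \lambda^{-1}|\partial\eta\lrcorner\alpha|^2$, and the latter is exactly the contribution of $\sqrt{-1}\partial\eta\wedge\bar\partial\eta/\lambda$. For $(0,1)$-forms the commutator with $\Lambda$ reduces to the Hermitian form on $E\otimes T\Omega$. We therefore obtain
\[
\|\sqrt{\eta+\lambda}\,\bar\partial^*\alpha\|^2+\|\sqrt{\eta}\,\bar\partial\alpha\|^2\ \ge\ \int \langle B\alpha,\alpha\rangle\ \ge\ \int |\alpha|^2_{h_E,\omega},
\]
where the hypothesis $h_E\otimes B\ge h_E\otimes\omega$ is used in the last inequality, and $|\alpha|^2_{h_E,\omega}$ denotes the norm dual to the one on $\beta$.

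Next comes the duality step. Decompose $\alpha=\alpha_1+\alpha_2$ with $\alpha_1\in\ker\bar\partial$ and $\alpha_2\perp\ker\bar\partial$. Since $\beta$ is $\bar\partial$-closed, Cauchy--Schwarz in the pointwise pairing of $\omega$ with its dual gives
\[
|\langle\alpha,\beta\rangle|^2=|\langle\alpha_1,\beta\rangle|^2\le \int|\beta|^2_{h_E,\omega}\cdot\int|\alpha_1|^2_{h_E,\omega}\le \int|\beta|^2_{h_E,\omega}\cdot\|\sqrt{\eta+\lambda}\,\bar\partial^*\alpha\|^2 ,
\]
where the last inequality uses $\bar\partial^*\alpha_2=0$. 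Hahn--Banach and the Riesz representation theorem then give $v$ with $\|v\|^2\le\int|\beta|^2$ and $\langle\alpha,\beta\rangle=\langle\sqrt{\eta+\lambda}\,\bar\partial^*\alpha, v\rangle$ for all $\alpha$. Setting $u=\sqrt{\eta+\lambda}\,v$ yields $\bar\partial u=\beta$ with $\int|u|^2/(\eta+\lambda)\le\int|\beta|^2_{h_E,\omega}$ on $\Omega_\nu$. Density of smooth forms in the graph norm (Hörmander) justifies applying the a priori estimate to general $\alpha$.

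Finally, let $\nu\to\infty$. The bounds are uniform, so a weak limit of the solutions $u_\nu$ in $L^2_{\mathrm{loc}}$ gives $u$ on $\Omega$ with the same estimate, and $\bar\partial u=\beta$ in the sense of distributions. I expect the main obstacle to be the a priori estimate: specifically, verifying that the pointwise operator inequality $B\ge\omega$ on $E\otimes T\Omega$ (Nakano-type) is precisely what controls the curvature term for $E$-valued $(0,1)$-forms with the dual norm, and handling the absorption of the $\partial\eta$ cross term into the weight $\eta+\lambda$. The regularization needed when $\omega$ and $\lambda$ are only continuous is secondary; there I would first approximate $\omega$ from above by smooth metrics and use monotone convergence.
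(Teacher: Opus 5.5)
Your proposal is correct and is the standard proof of the result the paper only cites (Theorem 13.4 of Demailly's book): the twisted Bochner--Kodaira--Nakano inequality, Cauchy--Schwarz absorption of the $\partial\eta$ cross term into $\lambda$, and Hahn--Banach duality. The differences from Demailly are only technical: you exhaust $\Omega$ by smoothly bounded pseudoconvex domains and use the nonnegative Levi-form boundary term, where Demailly works with a complete K\"ahler metric. Also, no regularization of $\omega$ or $\lambda$ is needed, since they enter only pointwise.

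One point to make explicit: the curvature term $[\,\cdot\,,\Lambda]$ yields the Hermitian form on $E\otimes T\Omega$ only in bidegree $(n,1)$. You should therefore state that you identify $E$-valued $(0,1)$-forms with $(n,1)$-forms via $dz_1\wedge\cdots\wedge dz_n$. This is harmless on $\Omega\subset\C^n$, where $K_\Omega$ is trivial and flat.
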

By choosing appropriate weights \( h_E \), \( \eta \), and \( \lambda \), we obtain the following:

\begin{thm}\label{a Priori}
  Let $\Omega \subset \C^n$ be a pseudoconvex domain and let $(E,h_E)$ be a rank-$r_E$ Nakano semi-positive bundle. Let $\varphi$ be a negative $C^2$ plurisubharmonic function on $\Omega$. Then for any $E$-valued $\dbar$-closed $(0,1)$-form $\beta$ and positive number $\alpha>0$ such that
  \begin{equation}
      \int_{\Omega} \frac{1-e^{\alpha\varphi}}{\alpha} \abs{\beta}^2_{h_E,\dd\varphi} < +\infty,
  \end{equation}
there exists an $E$-valued measurable section $u$ such that
\begin{gather}
  \dbar u = \beta \; \text{in the sense of distributions}\\
  \int_{\Omega} e^{\alpha\varphi} \abs{u}_{h_E}^2 \le \int_{\Omega} \frac{1-e^{\alpha\varphi}}{\alpha} \abs{\beta}^2_{h_E,\dd\varphi}.
\end{gather}
\end{thm}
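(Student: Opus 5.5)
The plan is to apply Lemma~\ref{twisted L^2-existence theorem} with the bundle metric twisted by a function of $\varphi$, and with $\eta$ and $\lambda$ also chosen as functions of $\varphi$. We then pick these three functions so that the left-hand weight $1/(\eta+\lambda)$, combined with the twisted metric, reproduces $e^{\alpha\varphi}\abs{u}^2_{h_E}$, and the right-hand side becomes $\frac{1-e^{\alpha\varphi}}{\alpha}\abs{\beta}^2_{h_E,\dd\varphi}$.

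\textbf{Step 1: ansatz and curvature computation.} Put $h=h_E e^{-\chi(\varphi)}$, $\eta=s(\varphi)$ and $\lambda=\ell(\varphi)$, with $\chi$ increasing and convex, $s>0$, $\ell>0$. Since $i\Theta_h=i\Theta_{h_E}+\chi'(\varphi)\dd\varphi+\chi''(\varphi)\i\partial\varphi\wedge\dbar\varphi$, the chain rule gives
\[
B\;\ge\;\bigl(s\chi'-s'\bigr)\dd\varphi\otimes Id_E+\Bigl(s\chi''-s''-\frac{(s')^2}{\ell}\Bigr)\i\partial\varphi\wedge\dbar\varphi\otimes Id_E .
\]
Here the term $s\,i\Theta_{h_E}$ has been dropped using the Nakano semi-positivity of $h_E$.
\begin{itemize}
\item The gradient coefficient is made $\ge 0$ by setting $\ell=(s')^2/(s\chi''-s'')$.
\item We then take $\omega=(s\chi'-s')\dd\varphi$.
\end{itemize}
With these choices the lemma gives
\[
\int_\Omega \frac{e^{-\chi(\varphi)}\abs{u}^2_{h_E}}{s+\ell}\;\le\;\int_\Omega \frac{e^{-\chi(\varphi)}}{s\chi'-s'}\,\abs{\beta}^2_{h_E,\dd\varphi}.
\]
The identity $\abs{\beta}^2_{\omega}=\abs{\beta}^2_{\dd\varphi}/(s\chi'-s')$ holds for $(0,1)$-forms, since the norm scales inversely with the metric.

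\textbf{Step 2: the ODE.} On $t=\varphi<0$ we need
\[
\frac{e^{-\chi(t)}}{s+\ell}=e^{\alpha t},\qquad \frac{e^{-\chi(t)}}{s\chi'-s'}=\frac{1-e^{\alpha t}}{\alpha},
\]
subject to $s,\ell>0$ and $s\chi''-s''\ge 0$. I would solve this by looking for elementary exponential solutions. First trials show that the naive choice $\chi=0$, $s=(1-e^{\alpha t})/\alpha$, $\ell=e^{\alpha t}/\alpha$ gives $\eta+\lambda=1/\alpha$ and the estimate $\alpha\int\abs{u}^2\le\int e^{-\alpha\varphi}\abs{\beta}^2_{\dd\varphi}$. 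That has the right flavour but the wrong weights, so a nonzero twist $\chi$ (a multiple of $t$, or $-\log$ of an affine function of $e^{\alpha t}$) is needed. Solving this system with the sign constraints for $t\in(-\infty,0)$ is the \emph{main obstacle}. If it is only solvable as an inequality, that suffices, since any $\omega$ dominated by $B$ is allowed and only larger left-hand weights are needed.

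\textbf{Step 3: regularization.} The lemma requires strict positivity of $h_E\otimes B-h_E\otimes\omega$ and $\omega$ to be a genuine Hermitian metric, whereas $\dd\varphi$ may degenerate. We therefore replace $\varphi$ by $\varphi+\varepsilon(\abs{z}^2-C)$ on relatively compact pseudoconvex subdomains $\Omega_j\Subset\Omega$ where this stays negative. We also replace $\omega$ by $\omega-\delta\,\i\partial\bar\partial\abs{z}^2$ and use $\abs{\beta}^2_{\omega+\varepsilon'}\le\abs{\beta}^2_{\omega}$. Solving on each $\Omega_j$ gives solutions with uniformly bounded weighted $L^2$ norms. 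Extracting a weak limit and letting $\varepsilon,\delta\to0$ and $j\to\infty$ (monotone convergence for the weights, weak closedness of $\dbar u=\beta$ in distributions) yields the required $u$ on $\Omega$.
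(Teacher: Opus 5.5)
\textbf{Gap: Step 2 is never carried out.} Your Step 1 is correct and is exactly the paper's framework: twist $h_E$ by $e^{-\chi(\varphi)}$, take $\eta=s(\varphi)$ and $\lambda=\ell(\varphi)$, kill the gradient term, and read off the estimate. However, the proof of this theorem consists precisely of exhibiting functions that solve your Step 2 system with the correct signs on all of $(-\infty,0)$. You leave this open, calling it the main obstacle and only guessing the shape of $\chi$. As written, the proposal is a reduction rather than a proof. Nothing yet guarantees that $s,\ell>0$ with $s\chi''-s''\ge 0$ exist satisfying both weight identities. Your fallback (``an inequality suffices'') does not help until some solution is actually produced.

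\textbf{The missing idea.} Use the scaling freedom to normalize $s\chi'-s'\equiv 1$, so that $\omega=\dd\varphi$. Then:
\begin{itemize}
\item Your second equation forces $e^{-\chi(t)}=\frac{1-e^{\alpha t}}{\alpha}=\int_t^0 e^{\alpha y}\,dy$. This is your guessed ``$-\log$ of an affine function of $e^{\alpha t}$''.
\item The normalization $s\chi'-s'=1$ is the linear equation $(se^{-\chi})'=-e^{-\chi}$. It is solved by $s=e^{\chi}U$ with $U(t)=\int_t^0\int_y^0 e^{\alpha\tau}\,d\tau\,dy>0$.
\item Differentiating the normalization gives $s\chi''-s''=-s'\chi'$. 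So your $\ell$ becomes $\ell=-s'/\chi'$, and $s+\ell=(s\chi'-s')/\chi'=1/\chi'$.
\item Hence $e^{-\chi}/(s+\ell)=\chi'e^{-\chi}=-(e^{-\chi})'=e^{\alpha t}$. Your first equation is therefore automatic.
\end{itemize}
The only remaining check is $\ell>0$, i.e.\ $s\chi'<1$. With $y=\alpha t<0$ this reduces to $e^{y}(1-y)<1$, which holds because $e^y(1-y)$ is increasing on $(-\infty,0)$ with value $1$ at $0$.

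This is exactly the paper's choice $T=\chi$, $\eta=Ue^{T}$, $\lambda=-\eta'/T'$. For it, $B=\i\eta\Theta_{h_E}+\dd\varphi\otimes Id_E\ge \dd\varphi\otimes Id_E$, and the lemma gives the stated estimate directly. Your Step 3 regularization, which makes $\dd\varphi$ and $B-\omega$ strictly positive as the lemma formally requires, is a reasonable detail the paper glosses over. It only becomes meaningful once Step 2 has been supplied.
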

\begin{proof}[Proof of Theorem \ref{a Priori}]
  We consider the following smooth positive functions defined on \(\mathbb{R}_{<0}\):

\[
T(x) := -\log \int_x^0 e^{\alpha y} dy= -\log\left(\frac{1-e^{\alpha x}}{\alpha}\right) \,, \quad
U(x) := \int_x^0\int_y^0 e^{\alpha t} \,dt\,dy.
\]

We define the weight function by
$
\eta(x) := U(x) e^{T(x)}
$
and $
\lambda(x) := -\frac{\eta'(x)}{T'(x)}.
$
These functions satisfy the following ODE system:

$$\frac{e^{-T}}{\eta + \lambda} = e^{\alpha x}$$
$$\eta T' -\eta' = 1$$
$$\eta T'' - \eta'' - \frac{(\eta')^2}{\lambda} = 0.$$

For the weights $ \eta(\varphi), \lambda(\varphi)$ and the metric $\tilde{h}_E = e^{-T(\varphi)}h_E$, we can compute the twisted curvature operator as follows:
\begin{gather}
B := \imaginary \eta \, \Theta_{\tilde{h}_E} - \imaginary \partial \bar{\partial} \eta \otimes \mathrm{Id}_E
- \frac{\imaginary \partial \eta \wedge \bar{\partial} \eta}{\lambda} \otimes \mathrm{Id}_E\\
= \imaginary\eta(\varphi) \, \Theta_{h_E} + \left[ \eta(\varphi) T'(\varphi) - \eta'(\varphi) \right] \imaginary \partial \bar{\partial} \varphi \otimes \mathrm{Id}_E + \imaginary \left[\eta(\varphi) T''(\varphi) - \eta''(\varphi) - \frac{(\eta'(\varphi))^2}{\lambda(\varphi)}\right] \partial \varphi \wedge \dbar \varphi\\
=\imaginary\eta \, \Theta_{h_E} +\dd\varphi\otimes Id_E\\
\ge \dd\varphi\otimes Id_E.
\end{gather}
Then we may apply Theorem \ref{twisted L^2-existence theorem} to conclude the desired \(L^2\)-estimate.

\end{proof}
\begin{exam}\label{ex:equality-refined-hormander}
The refined H\"ormander estimate is sharp. Let $\Omega=\Delta$ be the unit disc and set $\varphi(z)=|z|^2-1$. We take $C(t)=e^t$ and $D(t)=1-e^t$. Then $D'(t)=-C(t)$, and $\varphi|_{\partial\Delta}=D(\varphi)|_{\partial\Delta}=0$.

Consider the equation $\bar\partial u=d\bar z$. Its $L^2(\Delta,C(\varphi))$-minimal solution is $u=\bar z$. Indeed, every solution is of the form $u=\bar z+h$, where $h\in\mathcal O(\Delta)$, and $\bar z$ is orthogonal to $\mathcal O(\Delta)$ with respect to the radial weight $C(\varphi)=e^{|z|^2-1}$.

We now verify the equality case without evaluating either integral explicitly. Since $\bar\partial\varphi=z\,d\bar z$ and $D'=-C$, we have
\[
\begin{aligned}
\bar\partial\bigl(D(\varphi)\bar z\,dz\bigr)
&=
\bar\partial\bigl(D(\varphi)\bar z\bigr)\wedge dz\\
&=
\left(D'(\varphi)|z|^2+D(\varphi)\right)d\bar z\wedge dz\\
&=
\left(C(\varphi)|z|^2-D(\varphi)\right)dz\wedge d\bar z.
\end{aligned}
\]
Therefore, using $d\lambda=\frac{\sqrt{-1}}{2}dz\wedge d\bar z$, we obtain
\[
\begin{aligned}
\int_\Delta C(\varphi)|u|^2\,d\lambda
&=
\int_\Delta C(\varphi)|z|^2\,d\lambda\\
&=
\int_\Delta D(\varphi)\,d\lambda
+
\frac{\sqrt{-1}}{2}
\int_\Delta
\bar\partial\bigl(D(\varphi)\bar z\,dz\bigr)\\
&=
\int_\Delta D(\varphi)\,d\lambda
+
\frac{\sqrt{-1}}{2}
\int_{\partial\Delta}D(\varphi)\bar z\,dz\\
&=
\int_\Delta D(\varphi)\,d\lambda.
\end{aligned}
\]
Here, the boundary term vanishes because $D(\varphi)=0$ on $\partial\Delta$. Thus, equality holds in the refined H\"ormander estimate.

This example indicates that the boundary condition $\varphi=0$ is essential for the equality case.
\end{exam}

\subsection[A proof of sharp $L^2$-division theorem for multiplier ideal sheaves]{A proof of sharp $L^2$-division theorem for multiplier ideal sheaves}
\begin{proof}
  The proof proceeds in two steps. The case $r=1$ is immediate, so we assume $r\geq2$.
  \subsubsection*{Step 1. Smooth case}
  We first establish the result under the assumption that the weight functions \(\psi, \varphi_1, \dots, \varphi_r\) are smooth. We define
$
E := \Omega \times \mathbb{C}^r$, $Q := \Omega \times \mathbb{C}
$
to be the trivial holomorphic vector bundles of rank \(r\) and \(1\), respectively.
Define a holomorphic bundle morphism \( g: E \to Q \) by
$
g := (1, 1, \dots, 1).
$
Then the kernel \( S := \ker g \subset E \) defines a holomorphic subbundle of rank \( r - 1 \).

We equip \( E \) with the Hermitian metric \( h \) given by the diagonal matrix
$
h := \mathrm{diag}(e^{-\varphi_1}, \dots, e^{-\varphi_r})
$.
Since the curvature of each line bundle component is given by
$
\sqrt{-1}\Theta_{e^{-\varphi_j}} = \sqrt{-1} \, \partial \bar{\partial} \varphi_j \geq 0,
$
the total metric \( h \) on \( E \) is Nakano semi-positive.
The Hermitian metric on the quotient bundle \( Q \) induced by \( h \) is given by
$
h_Q = e^{-\varphi},
$
here we define the function
$
\varphi := \log \left( e^{\varphi_1} + \cdots + e^{\varphi_r} \right).
$
By Theorem \ref{Nakano positivity of sub bundle and quotient bundle}, the metric
$
 \tilde{h}_S = e^{-q \varphi} h_S
$
on the subbundle \( S \) is Nakano semi-positive, where \( q = \min(n, r - 1) \).

Let \( f \in \Gamma(\Omega, Q) \) be a holomorphic section. Using the lifting \( g^* f  \in C^{\infty}(\Omega, E) \), we consider the \(\bar{\partial}\)-equation
$
\bar{\partial} u = \bar{\partial} g^* f,
$
to be solved with values in the subbundle \( S \subset E \).
We apply Theorem \ref{a Priori} and Lemma \ref{Skoda trick} with twisting weight \( \alpha = q \), and conclude that there exists a solution $u$ for $\dbar$-equation $
\bar{\partial} u = \bar{\partial} g^* f
$ such that

\begin{align*}
\int_\Omega |u|^2_{h_S} e^{-\psi} =
\int_\Omega |u|^2_{\tilde{h}_S} e^{q \varphi -\psi} &\leq \int_\Omega \frac{1-e^{q\varphi}}{q} |\bar{\partial} g^* f|^2_{h_E, \sqrt{-1}\Theta_{\det Q}} e^{-q\varphi -\psi}\\ &\leq
\int_\Omega (1-e^{q\varphi})|g^* f|^2_{h_E} e^{-q\varphi-\psi}
\end{align*}

Define \( F := (F_1, \dots, F_r) := g^* f - u \).
Then \( F \in \Gamma(\Omega, E) \) is a holomorphic section satisfying
$
g \cdot F = g \cdot (g^* f - u) = f,
$
since \( g^* f \) maps to \( f \) and \( u \in \ker g \). Thus, the components \( F_1, \dots, F_r \) form a tuple of holomorphic functions on \( \Omega \) such that
$
\sum_{j=1}^r F_j = f.
$

Moreover, by construction, \(g^*f\in S^\perp\) and \(u\in S\), so they are orthogonal with respect to the Hermitian metric \(h\). Therefore, we have the orthogonal decomposition
\[
  \|F\|^2_{L^2}= \|g^* f\|^2_{L^2}+ \|u\|^2_{L^2}.
\]

Using the \(L^2\)-estimate from the previous step, we obtain:
\[
\int_\Omega |F|^2_h e^{-\psi} = \int_\Omega |g^* f|^2_h e^{-\psi}+ \int_\Omega |u|^2_h e^{-\psi} \leq \int_\Omega |g^* f|^2_h e^{-q\varphi-\psi} \leq \int_\Omega |f|^2 e^{-(q+1)\varphi-\psi}.
\]

which is the desired \(L^2\)-estimate.
\subsubsection*{Step 2. General case}
We now consider the general case, where the plurisubharmonic weights \( \varphi_1, \dots, \varphi_r \) and \( \psi \) may be singular.

Since \( \Omega \subset \mathbb{C}^n \) is a pseudoconvex domain,
there exists an increasing sequence of relatively compact pseudoconvex subdomains \( \Omega_k \Subset \Omega \) such that
$
\bigcup_{k} \Omega_k = \Omega.
$

Moreover, for each \( i = 1, \dots, r \), we can find a sequence of smooth plurisubharmonic functions
$
\varphi_i^{(k)} \in C^\infty(\Omega_k)
$
such that \( \varphi_i^{(k)} \searrow \varphi_i \) pointwise on compact subsets of \( \Omega \) as \( k \to \infty \). Similarly, we approximate the weight \( \psi \) by smooth plurisubharmonic functions \( \psi^{(k)} \). We choose the regularizations so that $\sum_{i=1}^r e^{\varphi_i^{(k)}}<1$ on $\Omega_k$.

We now apply Step 1 (i.e., the smooth case of the \(L^2\)-division theorem) to each tuple of smooth weights \( (\varphi_1^{(k)}, \dots, \varphi_r^{(k)}, \psi^{(k)}) \) on the domain \( \Omega_k \). Then, for each \( k \), we obtain holomorphic functions \(F_1^{(k)}, \dots, F_r^{(k)} \in \mathcal{O}(\Omega_k)\) satisfying:
\[
\sum_{j=1}^r F_j^{(k)} = f,
\]
and the following estimate:
\[
\int_{\Omega_k} \sum_{j=1}^r |F_j^{(k)}|^2 e^{-\varphi_j^{(k)} - \psi^{(k)}} \leq \int_\Omega \frac{|f|^2}{(\sum e^{\varphi_i})^{q+1}} e^{-\psi}.
\]

Passing to a subsequence and using standard weak compactness in \(L^2_{\mathrm{loc}}\), we obtain a limiting tuple of holomorphic functions \( (F_1, \dots, F_r) \in \mathcal{O}(\Omega)^r \) satisfying
\[
\sum_{j=1}^r F_j = f, \quad \text{and} \quad \int_{\Omega} \sum_{j=1}^r |F_j|^2 e^{-\varphi_j - \psi} \leq \int_\Omega \frac{|f|^2}{(\sum e^{\varphi_i})^{q+1}} e^{-\psi}.
\]

This completes the proof in the general (possibly singular) case.

\end{proof}

\section{Proof of the main result}

The purpose of this section is to prove Theorem \ref{main theorem}.
\begin{lem}
Let
$
\theta_k(p_1,\dots,p_r)
:=
\int_{\mathbb{R}_{\ge 0}^r}
\left(1+x_1^{p_1}+\cdots+x_r^{p_r}\right)^{-k}
\,dx_1\cdots dx_r,
$
where $p_1,\dots,p_r>0$. Assume that
$
k>\sum_{i=1}^r \frac{1}{p_i}.
$
Then we have
$$
\frac{\theta_{k+1}(p_1,\dots,p_r)}{\theta_k(p_1,\dots,p_r)}
=
\frac{
k-\sum_{i=1}^r \frac{1}{p_i}
}{k}.
$$
\end{lem}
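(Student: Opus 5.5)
We plan to compute $\theta_k$ in closed form by changing variables to reduce it to a one-dimensional Beta integral. After that, the ratio identity is just the functional equation of the Gamma function.

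First, substitute $y_i = x_i^{p_i}$, so that $dx_i = \frac{1}{p_i} y_i^{1/p_i - 1}\,dy_i$. This gives
\[
\theta_k(p_1,\dots,p_r)=\prod_{i=1}^r\frac{1}{p_i}\int_{\mathbb{R}_{\ge0}^r}(1+y_1+\cdots+y_r)^{-k}\prod_{i=1}^r y_i^{a_i-1}\,dy,\qquad a_i:=\frac{1}{p_i}.
\]
Next, pass to "polar" coordinates on the orthant. Write $y = t\,\sigma$ with $t=\sum y_i>0$ and $\sigma$ in the standard simplex $\Delta_{r-1}$. The Jacobian is $t^{r-1}$, and the integrand factorizes. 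The simplex part becomes the Dirichlet integral $\int_{\Delta_{r-1}}\prod\sigma_i^{a_i-1}\,d\sigma=\prod\Gamma(a_i)/\Gamma(A)$, where $A:=\sum a_i$. The radial part becomes
\[
\int_0^\infty (1+t)^{-k}t^{A-1}\,dt = B(A,k-A)=\frac{\Gamma(A)\Gamma(k-A)}{\Gamma(k)}.
\]
This integral converges exactly when $k>A$, which is the hypothesis; convergence at $t=0$ uses $A>0$. Alternatively, one can avoid the simplex altogether with the Gamma-function trick $(1+s)^{-k}=\Gamma(k)^{-1}\int_0^\infty u^{k-1}e^{-u(1+s)}\,du$. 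Fubini–Tonelli then factorizes the integrand into one-dimensional integrals $\int_0^\infty e^{-ux^{p_i}}\,dx=\Gamma(1+a_i)u^{-a_i}$, and a final $u$-integral gives $\Gamma(k-A)$. Either route yields
\[
\theta_k=\frac{\prod_{i}\Gamma(1+a_i)\,\Gamma(k-A)}{\Gamma(k)}.
\]
Since all integrands are nonnegative, Tonelli's theorem justifies every interchange, and finiteness follows from $k>A$.

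Finally, take the ratio. Only the $k$-dependent factors change, so
\[
\frac{\theta_{k+1}}{\theta_k}=\frac{\Gamma(k+1-A)}{\Gamma(k-A)}\cdot\frac{\Gamma(k)}{\Gamma(k+1)}=\frac{k-A}{k},
\]
using $\Gamma(s+1)=s\Gamma(s)$. Note that $k+1>A$ also holds, so $\theta_{k+1}$ is finite.

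There is no real obstacle here. The only points needing care are the convergence bookkeeping and the choice of route; I would use the Gamma-integral route, since it factorizes cleanly and avoids computing the simplex Jacobian. A purely elementary alternative is integration by parts via Euler's identity: the vector field $\sum_i \frac{x_i}{p_i}\partial_{x_i}$ applied to $(1+\sum x_i^{p_i})^{-k}$ gives $-k\,(\theta_k-\theta_{k+1})$ after integration. Integrating by parts produces $-A\,\theta_k$, and the boundary terms vanish since $k>A$. Equating the two expressions gives $k\theta_{k+1}=(k-A)\theta_k$ directly.
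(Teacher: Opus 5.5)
Your proof is correct, and the route you describe first is precisely the paper's proof: substitute $y_i=x_i^{p_i}$, pass to $s=\sum_i y_i$, $t_i=y_i/s$ with Jacobian $s^{r-1}$, and recognize the radial factor as $\Gamma(A)\Gamma(k-A)/\Gamma(k)$ with $A=\sum_i 1/p_i$. The paper never evaluates the simplex factor, since it is independent of $k$ and cancels in the ratio, so your Dirichlet-integral computation is correct but not needed. The Gamma-integral route you say you prefer is a mild but genuine variant. The Laplace representation of $(1+s)^{-k}$ together with Tonelli splits $\theta_k$ into one-variable integrals. This gives the full closed form $\theta_k=\prod_i\Gamma(1+1/p_i)\,\Gamma(k-A)/\Gamma(k)$ and the exact convergence criterion $k>A$ without any change of variables on the simplex. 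The Euler-field argument is the genuinely different one: it yields the recursion $k\,\theta_{k+1}=(k-A)\,\theta_k$ directly, with no special functions, and that recursion is all the lemma asks for. To make that route stand on its own, you must supply two things you only asserted. First, you need finiteness of $\theta_k$, which the integration by parts does not give. For example, bound $(1+\sum_i x_i^{p_i})^{-k}\le\prod_i(1+x_i^{p_i})^{-k_i}$ with $k_i>1/p_i$ and $\sum_i k_i=k$; such $k_i$ exist exactly when $k>A$. Second, you need an actual check of the boundary terms. On the exhaustion $\{\sum_i x_i^{p_i}\le R\}$ your field is tangent to the coordinate faces, and the remaining flux is $(1+R)^{-k}AR^{A}\,\mathrm{vol}\{\sum_i x_i^{p_i}\le 1\}$, which tends to $0$ since $k>A$. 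Alternatively, with no boundary terms at all, differentiate the scaling identity $\int_{\mathbb{R}^r_{\ge0}}(\lambda+\sum_i x_i^{p_i})^{-k}\,dx=\lambda^{A-k}\theta_k$ at $\lambda=1$; it follows from $x_i\mapsto\lambda^{1/p_i}x_i$.
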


\begin{proof}
By the change of variables $y_i=x_i^{p_i}$, we have

$$
\theta_k(p_1,\dots,p_r)
=
\frac{1}{p_1\cdots p_r}
\int_{\mathbb{R}_{\ge 0}^r}
(1+y_1+\cdots+y_r)^{-k}
\prod_{i=1}^r y_i^{a_i-1}
\,dy_1\cdots dy_r.
$$

Set
$
a_i:=\frac{1}{p_i}
\;(i=1,\dots,r).
$ We now evaluate the latter integral. Write
$
s:=y_1+\cdots+y_r,
$,
$
t_i:=\frac{y_i}{s}
$
so that $t_i\ge 0$, $\sum_{i=1}^r t_i=1$, and $y_i=s t_i$. The Jacobian of this transformation is
$
dy_1\cdots dy_r
=
s^{r-1}\,ds\,dt_1\cdots dt_{r-1},
$
where $(t_1,\dots,t_{r-1})$ ranges over the standard simplex
$
\Delta_{r-1}
:=
\left\{
(t_1,\dots,t_r)\in \mathbb{R}_{\ge 0}^r
\; ; \;
t_1+\cdots+t_r=1
\right\}.
$
Hence
$
\prod_{i=1}^r y_i^{a_i-1}
=
s^{\sum_{i=1}^r a_i-r}
\prod_{i=1}^r t_i^{a_i-1},
$
and thus
$$
\theta_k(p_1,\dots,p_r)
=
\frac{1}{p_1\cdots p_r}
\left(
\int_0^\infty
(1+s)^{-k}
s^{\sum_{i=1}^r a_i-1}
\,ds
\right)
\left(
\int_{\Delta_{r-1}}
\prod_{i=1}^r t_i^{a_i-1}
\,dt
\right).
$$

The first integral is the beta integral:
$
\int_0^\infty
(1+s)^{-k}
s^{\sum a_i-1}\,ds
=
\frac{
\Gamma\left(\sum_{i=1}^r a_i\right)
\Gamma\left(k-\sum_{i=1}^r a_i\right)
}{
\Gamma(k)
},
$
which is valid since $k>\sum_i a_i$. The second integral does not depend on $k$.

Finally, using $a_i = 1/p_i$ and the functional equation $\Gamma(z+1)=z\Gamma(z)$, we get
\[
\frac{\theta_{k+1}(p_1,\dots,p_r)}{\theta_k(p_1,\dots,p_r)}
=
\frac{
\Gamma\left(k+1-\sum_{i=1}^r \frac{1}{p_i}\right)
}{
\Gamma(k+1)
}
\cdot
\frac{
\Gamma(k)
}{
\Gamma\left(k-\sum_{i=1}^r \frac{1}{p_i}\right)
}
=
\frac{
k-\sum_{i=1}^r \frac{1}{p_i}
}{k}.
\]

\end{proof}
\begin{proof}[Proof of Theorem \ref{main theorem}]
Put $m:=\min(n,r)$. Given a tuple of positive numbers $(t_1,\dots,t_r)$, we define the tuple of plurisubharmonic functions
$$\Phi_i = \log \frac{t_i}{1+\sum_{j=1}^r t_j} + p_i\varphi_i\;\;(i = 1,\dots,r)\;\;\;\text{and}
\;\;\; \Phi_{r+1} = \log \frac{1}{1+\sum_{j=1}^r t_j}.$$
These functions satisfy $\sum_{i=1}^{r+1} e^{\Phi_i} <1$. By Theorem \ref{sharp L^2-division theorem}, applied to the $r+1$ weights above with $q=\min(n,r)=m$, we have a tuple of holomorphic functions $(h_1,\dots,h_{r+1})$ such that
$$ \sum_{i=1}^{r+1} h_i = f\;\;\; \text{and}$$
$$\sum_{i=1}^r \int_{\Omega} \frac{1+\sum_{j=1}^r t_j}{t_i} \abs{h_i}^2 e^{-\psi-p_i\varphi_i} + \int_{\Omega} \left(1+\sum_{j=1}^r t_j\right)\abs{h_{r+1}}^2 e^{-\psi}
\le \int_{\Omega} \frac{\abs{f}^2 e^{-\psi}}{\left(\sum_{i=1}^r\frac{t_ie^{p_i\varphi_i}}{1+\sum_{j=1}^r t_j} + \frac{1}{1+\sum_{j=1}^r t_j}\right)^{m+1}}.$$
From this inequality, we have $h_i \in \Gamma(\Omega,\cI(p_i \varphi_i+\psi))$ $(i = 1,\dots,r)$, and $f-h_{r+1} \in \sum_{i=1}^r\Gamma(\Omega,\cI(p_i\varphi_i+\psi))$.
This implies that
$$\frac{c_{\varphi_1,\dots,\varphi_r,f,\psi}(p_1,\dots,p_r)}{\left(1+\sum_{i=1}^r t_i\right)^{m}} \le \int_{\Omega} \frac{\abs{f}^2 e^{-\psi}}{\left(\sum_{i=1}^r t_i e^{p_i\varphi_i} + 1\right)^{m+1}}.$$
Now, putting $t_i = x_i^{p_i}$ and integrating both sides of the inequality over $\mathbb{R}_{>0}^r$, we obtain
$$\theta_m(p_1,\dots,p_r)c_{\varphi_1,\dots,\varphi_r,f,\psi}(p_1,\dots,p_r)
\le \int_{\mathbb{R}_{>0}}\cdots\int_{\mathbb{R}_{>0}}\int_{\Omega} \frac{\abs{f}^2 e^{-\psi}}{\left(\sum_{i=1}^r x_i^{p_i} e^{p_i\varphi_i} + 1\right)^{m+1}}\,dx_1\cdots dx_r.$$
By Tonelli's theorem, the right-hand side equals
$\theta_{m+1}(p_1,\dots,p_r)\int_{\Omega}\abs{f}^2 e^{-\psi-\varphi_1-\cdots-\varphi_r}$.
Using the above lemma and the assumption $\sum_{i=1}^r 1/p_i<m$, we obtain the desired estimate.
\end{proof}

\begin{exam}\label{ex:constant-bidisc}
Let $\Omega=\Delta^2$ be the unit bidisc, let $\psi=0$, and set
$\varphi_1=\frac{1}{p_1}\log |z_1|^2$ and
$\varphi_2=\frac{1}{p_2}\log |z_2|^2$, where $p_1,p_2>1$.
Then $p_1\varphi_1=\log |z_1|^2$ and
$p_2\varphi_2=\log |z_2|^2$, so that
$\mathcal I(p_1\varphi_1)=(z_1)$ and
$\mathcal I(p_2\varphi_2)=(z_2)$.

Let $f=1$. The admissibility condition
$1-g\in z_1\mathcal O(\Delta^2)+z_2\mathcal O(\Delta^2)$
is equivalent to $g(0,0)=1$. Since the constant function is orthogonal in
$A^2(\Delta^2)$ to every holomorphic function vanishing at the origin,
the unique minimizer is $g\equiv1$. Hence
$c_{\varphi_1,\varphi_2,1,0}(p_1,p_2)=\pi^2$.

On the other hand,
$I=\int_{\Delta^2}|z_1|^{-2/p_1}|z_2|^{-2/p_2}\,d\lambda$.
Since
$\int_\Delta |z|^{-2/p}\,d\lambda=\frac{\pi p}{p-1}$,
we have
\[
I=\frac{\pi^2p_1p_2}{(p_1-1)(p_2-1)}.
\]
Consequently,
\[
\begin{aligned}
c_{\varphi_1,\varphi_2,1,0}(p_1,p_2)
&=\pi^2\\
&<
\left(1-\frac12\left(\frac1{p_1}+\frac1{p_2}\right)\right)
\frac{\pi^2p_1p_2}{(p_1-1)(p_2-1)}\\
&=
\left(1-\frac12\left(\frac1{p_1}+\frac1{p_2}\right)\right)I.
\end{aligned}
\]
Indeed, after division by $I$, the strict inequality is equivalent to
\[
\left(1-\frac1{p_1}\right)\left(1-\frac1{p_2}\right)
<
1-\frac12\left(\frac1{p_1}+\frac1{p_2}\right),
\]
and the difference between the right-hand side and the left-hand side is
$\frac{p_1+p_2-2}{2p_1p_2}>0$.
\end{exam}

Example~\ref{ex:constant-bidisc} shows that the inequality in Theorem~\ref{main theorem} can be strict already for $r=2$. This suggests that a better constant may exist. It would therefore be interesting to determine the optimal constant.

\section{Relations among $L^2$-division, $L^2$-extension, and strong openness}\label{relation}

The $L^2$-division theorem, the $L^2$-extension theorem, and the strong openness theorem are closely related. In this section, we explain that the $L^2$-division theorem provides a common source for them, in the sense that suitable sharp division estimates imply both the sharp $L^2$-extension theorem and the sharp effective openness theorem.

We first recall the following classical $L^2$-extension theorem. Let $\Omega \subset \C^n$ be a pseudoconvex domain, and $S$ be a smooth closed submanifold with $\codim S =k \ge 1$. Assume that there exist holomorphic functions $T_i$ $(i = 1,\dots,k)$ such that $$ S = \{T_1 = \dots = T_k = 0\}\;\;\;and$$
$$dT := dT_1 \wedge \dots \wedge dT_k \ne 0\;\;\; on\; S.$$
We define the measure on $S$ : $d\lambda_{S} = \frac{d\lambda}{\imaginary^{k^2}dT\wedge d\bar{T}}$
, where $d\lambda$ is the Lebesgue measure on $\C^n$ .
We also assume that there exists a negative plurisubharmonic function $\Psi$ on $\Omega$ such that $\Psi \in C^{\infty}(\Omega\setminus S)$ and $\Psi = k\log\sum\abs{T_i}^2 + B(x)$ where $B$ is continuous.
The following $L^2$-extension theorem is a central result in $L^2$ theory.
\begin{thm}\label{sharp L2 extension}
  There exists a constant \(C_k > 0\) depending only on the codimension \(k\) such that the following holds:
  For any plurisubharmonic function $\varphi$ on $\Omega$ and holomorphic function $f \in \cO(S)$, satisfying $$ \int_S  \abs{f}^2 e^{-\varphi-B} d\lambda_S < +\infty,$$ there exists a holomorphic function $F \in \cO(\Omega)$  satisfying $F\vert_S = f$ and $$\int_{\Omega}  \abs{F}^2 e^{-\varphi} \le C_k \int_S  \abs{f}^2 e^{-\varphi-B} d\lambda_S.$$
\end{thm}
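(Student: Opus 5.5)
Following the strategy of Albesiano \cite{A25} that the section is advertising, I will derive Theorem \ref{sharp L2 extension} from Theorem \ref{sharp L^2-division theorem}. The division is applied to the ideal $(T_1,\dots,T_k)$, viewed through the weights $\log\abs{T_i}^2$, together with one extra "constant" weight, mimicking the proof of Theorem \ref{main theorem}.

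\textbf{Step 1: an initial holomorphic extension.} Since $\Omega$ is Stein and $S$ is a closed submanifold, Cartan's Theorem B gives some $\tilde f\in\cO(\Omega)$ with $\tilde f|_S=f$. Its $L^2$ norm is uncontrolled, so we only use it as the function to be divided.

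\textbf{Step 2: division with parameters.} For $\varepsilon>0$ and parameters $t_1,\dots,t_k>0$, consider the $k+1$ weights
\[
\Phi_i=\log\frac{t_i\abs{T_i}^{2}e^{B'}}{1+\sum_j t_j},\quad \Phi_{k+1}=\log\frac{1}{1+\sum_j t_j},
\]
normalized so that $\sum e^{\Phi_i}<1$. After shrinking and rescaling, this uses that $\Psi<0$. We apply Theorem \ref{sharp L^2-division theorem} with $\psi=\varphi$ and $q=\min(n,k)=k$, since $k\le n$. This produces $h_1,\dots,h_{k+1}$ with $\sum_i h_i=\tilde f$. Each $h_i$ with $i\le k$ lies in $\cI(\log\abs{T_i}^2)$, so it vanishes on $S$; hence $F:=h_{k+1}$ still restricts to $f$ on $S$. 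The estimate reads
\[
\int_\Omega\abs{F}^2e^{-\varphi}\le(1+\textstyle\sum t_j)^{k}\int_\Omega\frac{\abs{\tilde f}^2e^{-\varphi}}{(1+\sum_i t_i\abs{T_i}^2e^{B'})^{k+1}}.
\]
To pass from $\tilde f$ to $f$ on $S$, rescale $t_i=s^{-1}x_i$ (a homogeneous choice). The right-hand side then concentrates on the tube $\{\sum\abs{T_i}^2\lesssim s\}$ as $s\to0$. Since $\int_{\C^k}(1+\abs{w}^2)^{-k-1}d\lambda(w)=\pi^k/k!$, the right-hand side tends to $\frac{\pi^k}{k!}\int_S\abs{f}^2e^{-\varphi-B}d\lambda_S$ times the factor $s^{k}(1+\sum t_j)^k$, which stays bounded. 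This uses the Fubini-type fibration of a neighborhood of $S$ by $T$ and the definition of $d\lambda_S$. Lower semicontinuity of $\varphi$ and the weak $L^2$ limit of the $F$'s (as in Step 2 of the proof of Theorem \ref{sharp L^2-division theorem}) give a limiting $F$ with the required estimate and some constant $C_k$. This constant can be sharpened by averaging over $t$ as in the proof of Theorem \ref{main theorem}, using the lemma on $\theta_k$.

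\textbf{Main obstacle.} The hard part is the limit $s\to0$. The fibers of $T$ are not flat, and $B$ is merely continuous. More seriously, the weight $\varphi$ is only plurisubharmonic, so $e^{-\varphi}$ need not converge on the tubes to its restriction to $S$. Here the limit inequality must be an upper bound, and upper semicontinuity of $\varphi$ goes the wrong way for that. I would first try the regularization of Step 2 of the proof of Theorem \ref{sharp L^2-division theorem}: smooth $\varphi$ and work on $\Omega_k\Subset\Omega$. For smooth $\varphi$ the concentration argument is a dominated-convergence computation. Then let the regularization decrease to $\varphi$, using monotone convergence on $S$ where $\int_S\abs{f}^2e^{-\varphi-B}<\infty$ is assumed. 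Finally I would use the hypothesis that $\Psi$ is negative to ensure the normalization $\sum e^{\Phi_i}<1$ holds uniformly along these regularizations.
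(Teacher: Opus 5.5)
The paper does not prove this theorem. It is recalled as the classical Ohsawa--Takegoshi theorem, and division-based proofs are only referenced to \cite{A25} and \cite{M.T}. Taken on its own, your argument has a genuine gap in Step~2.

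\textbf{The weights are not plurisubharmonic.} Theorem~\ref{sharp L^2-division theorem} requires plurisubharmonic weights; its proof uses Nakano semipositivity of $\mathrm{diag}(e^{-\varphi_i})$. Your $\Phi_i$ is psh only if $\log\abs{T_i}^2+B'$ is. To get both the normalization from $\Psi<0$ and the factor $e^{-B}$ at the end, you are forced to take $B'=B/k$. But the hypotheses only make the combination $\Psi=k\log\sum\abs{T_i}^2+B$ psh, with $B$ merely continuous. For $k\ge2$ this matters: $\dd(k\log\abs{T}^2)$ has size $\abs{T}^{-2}$ in the angular normal directions. So $B$ can fail to be psh there while $\Psi$ stays psh (e.g.\ $B=\abs{T}^2g([T])$ near $S$, with $g>0$ on $\mathbb{P}^{k-1}$ having a sharp maximum). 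Then $\log\abs{T_i}^2+B/k$ is in general not psh.

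Dropping $B'$ does not rescue the argument. The weights $\log(t_i\abs{T_i}^2)$ are psh, but normalizing them forces a rescaling by $\sup\abs{T}^2$. You then get $\sup\abs{T}^{2k}$ instead of $e^{-B}$ on $S$, and this is not $\le C_k e^{-B}$ (take $\{\abs{z_n}<1,\ \abs{T}^2<e^{-A\,\mathrm{Re}\,z_n}\}$ with $A\to\infty$).

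\textbf{The constant is also wrong.} Your limit bookkeeping drops a Jacobian. With $t_i=x_i/s$, the normal integral gives $s^k e^{-kB'}\prod_j x_j^{-1}\cdot\frac{\pi^k}{k!}$. So the constant is $\frac{(\sum_j x_j)^k}{\prod_j x_j}\frac{\pi^k}{k!}\ge\frac{k^k\pi^k}{k!}$. Averaging over $x$ cannot beat this infimum, so the claimed sharpening via the $\theta_k$-lemma is false. Splitting the ideal coordinatewise is intrinsically lossy for $k\ge2$.

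\textbf{The missing idea} is to keep $\Psi$ as a single weight. In other words, copy the proof of Theorem~\ref{main theorem} with $r=1$ and $p_1\varphi_1$ replaced by $\Psi$: take $\Phi_1=\Psi+\log\frac{t}{1+t}$, $\Phi_2=\log\frac{1}{1+t}$, $\psi=\varphi$, and $q=1$. Then:
\begin{itemize}
\item $\Phi_1$ is psh by hypothesis, and $e^{\Phi_1}+e^{\Phi_2}<1$ is exactly $\Psi<0$.
\item $h_1\in\cI(\Psi+\varphi)\subset\cI(k\log\abs{T}^2)=(T_1,\dots,T_k)$ near $S$, since $B$ is locally bounded and $\varphi$ is locally bounded above. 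Hence $F:=h_2$ extends $f$.
\item The division estimate gives $\int_\Omega\abs{F}^2e^{-\varphi}\le(1+t)\int_\Omega\abs{\tilde f}^2e^{-\varphi}(1+te^{\Psi})^{-2}$.
\end{itemize}
Now substitute $T=t^{-1/(2k)}u$ in the normal directions, for smooth $\varphi$ on $\Omega_j\Subset\Omega$. The $\limsup$ as $t\to\infty$ is at most $\int_{\C^k}(1+\abs{u}^{2k})^{-2}d\lambda(u)=\frac{\pi^k}{k!}$ times $\int_S\abs{f}^2e^{-\varphi-B}d\lambda_S$, up to the normalization of $d\lambda_S$. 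This is already the optimal constant, with no averaging.

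Your regularization and weak-limit argument then finish the proof as you planned. Note that $\varphi_j\searrow\varphi$ gives $e^{-\varphi_j}\le e^{-\varphi}$ on $S$ directly, so no convergence on $S$ is needed. The point you flagged as the main obstacle is in fact routine.
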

A natural question is how small the constant $C_k$ in the $L^2$-extension theorem can be chosen. The optimal constant problem was first solved by B{\l}ocki \cite{blocki suita} in the case where $\Omega$ is a pseudoconvex domain and
\[
S=\Omega\cap\{z_n=0\}
\]
is a hyperplane section. Guan and Zhou \cite{GZ15} subsequently established the result in the general setting.

\begin{thm}
  Under the assumptions of the $L^2$-extension theorem, the constant $C_k$ can be taken to be \[ C_k=\frac{\pi^k}{k!}, \] the volume of the unit ball in $\mathbb{C}^k$. Moreover, this constant is optimal.
\end{thm}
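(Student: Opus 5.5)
The plan is to derive the sharp constant from the $L^2$-division theorem (Theorem \ref{sharp L^2-division theorem}), following the pattern of the proof of Theorem \ref{main theorem} and the idea of \cite{A25}. We use only two weights, so that $q=\min(n,1)=1$, and then let a parameter $t$ tend to infinity.

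\textbf{Step 1: reduction.} By a standard exhaustion of $\Omega$ by relatively compact pseudoconvex subdomains, smoothing of $\varphi$ and $B$, and a weak-limit argument as in Step 2 of the proof of Theorem \ref{sharp L^2-division theorem}, we may assume the following:
\begin{itemize}
\item $\varphi$ is smooth and bounded, and $B$ is continuous and bounded;
\item $f$ is holomorphic on a neighbourhood of $\bar S$.
\end{itemize}
Since $\Omega$ is Stein, Cartan's Theorem B gives some $F_0\in\cO(\Omega)$ with $F_0|_S=f$ and $\int_\Omega|F_0|^2e^{-\varphi}<+\infty$. The norm of $F_0$ is uncontrolled; it only serves as input. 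Near $S$ we have $e^{-\Psi}=|T|^{-2k}e^{-B}$. A holomorphic $h$ with $\int|h|^2e^{-\varphi-\Psi}<\infty$ must therefore vanish on $S$, since $|T|^{-2k}$ is non-integrable transversally in codimension $k$.

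\textbf{Step 2: division.} For $t>0$ set
$$\Phi_1=\log\frac{t}{1+t}+\Psi,\qquad \Phi_2=\log\frac{1}{1+t}.$$
Because $\Psi<0$, we have $e^{\Phi_1}+e^{\Phi_2}<1$. Apply Theorem \ref{sharp L^2-division theorem} to $F_0$ with weight $\psi=\varphi$ and $q=1$. This gives $h_1+h_2=F_0$ with
$$\frac{1+t}{t}\int_\Omega|h_1|^2e^{-\varphi-\Psi}+(1+t)\int_\Omega|h_2|^2e^{-\varphi}\le\int_\Omega\frac{(1+t)^2|F_0|^2e^{-\varphi}}{(1+te^{\Psi})^2}.$$
The right-hand side is finite because the weight is at most $(1+t)^2$. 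By Step 1, $h_1$ vanishes on $S$, so $F_t:=h_2$ satisfies $F_t|_S=f$. Dividing by $1+t$, we get
$$\int_\Omega|F_t|^2e^{-\varphi}\le\int_\Omega\frac{(1+t)\,|F_0|^2e^{-\varphi}}{(1+t|T|^{2k}e^{B})^2}.$$

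\textbf{Step 3: the limit $t\to\infty$ (the main obstacle).} We must show that the right-hand side tends to $\frac{\pi^k}{k!}\int_S|f|^2e^{-\varphi-B}\,d\lambda_S$.

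Away from any neighbourhood of $S$, the integrand is $O(1/t)$ and its contribution vanishes.

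Near $S$, use local coordinates $(w,x)$ with $w=T$, in which $d\lambda=\frac{d\lambda(w)\,d\lambda_S(x)}{|\text{Jac}|}$, consistent with the definition of $d\lambda_S$. In the fibre $\C^k$, substitute $u=t^{1/(2k)}w$ and $s=|u|^{2k}$, and use $d\lambda(u)=\frac{\pi^k}{k!}\,ds$ (the ball-volume formula). This gives
$$\int_{\C^k}\frac{t\,d\lambda(w)}{(1+t|w|^{2k}c)^2}=\frac{\pi^k}{k!}\int_0^\infty\frac{ds}{(1+cs)^2}=\frac{\pi^k}{k!\,c},\qquad c=e^{B(x)}.$$
Continuity of $F_0$, $\varphi$ and $B$ up to $S$ then yields the claimed limit by dominated convergence. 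The delicate points are uniformity near the boundary of $\Omega$ and the non-compactness of $S$; this is why we work first on relatively compact subdomains with smoothed data. Finally, a weak limit of $F_t$ in $L^2(e^{-\varphi})$ as $t\to\infty$ gives $F$ with $F|_S=f$ and constant $C_k=\pi^k/k!$.

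\textbf{Optimality.} Take $\Omega$ the unit ball in $\C^k$, $S=\{0\}$, $T_i=z_i$, $\Psi=k\log|z|^2$ (so $B=0$), $\varphi=0$ and $f=1$. Then $\int_S|f|^2\,d\lambda_S=1$. The minimal $L^2$ extension of $1$ is the constant $1$, because constants are orthogonal in $A^2(\Omega)$ to every function vanishing at $0$. Its norm is $\pi^k/k!$, so no smaller constant works.
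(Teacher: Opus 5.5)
The paper does not prove this statement. It quotes it from B{\l}ocki \cite{blocki suita} and Guan--Zhou \cite{GZ15}, and only remarks in Section~\ref{relation} that \cite{M.T} recovers it from the sharp division theorem. Your proposal is a correct implementation of that remark. It is the one-weight analogue of the proof of Theorem~\ref{main theorem}: the same weights $\log\frac{t}{1+t}+\Psi$ and $\log\frac{1}{1+t}$, the same $q=1$, with the limit $t\to\infty$ replacing integration in $t$.

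The cited proofs solve a twisted $\bar\partial$-equation for a cut-off of an extension, with auxiliary functions determined by an ODE, and then remove the cut-off. Your route instead puts all the ODE work inside Theorem~\ref{a Priori}, needs no cut-off, and reads off $\pi^k/k!$ from the fibre integral $\int_{\mathbb{C}^k}(1+|u|^{2k})^{-2}\,d\lambda(u)$. In exchange, it relies on the Nakano-positivity and Skoda-trick machinery behind Theorem~\ref{sharp L^2-division theorem}. It also needs the exact form $\Psi=k\log|T|^2+B$ with $B$ continuous, for your pointwise limit. The Guan--Zhou method, by contrast, also gives general gain functions and allows more general $\Psi$.

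A few points should be tightened.

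\textbf{Integrability of $F_0$.} Cartan's Theorem B gives $F_0$ with no integrability on $\Omega$. Moreover, $e^{\Psi}$ need not be bounded below away from $S$ near $\partial\Omega$. Both facts you use hold only on $\Omega_j\Subset\Omega$. So run Steps 2--3 on $\Omega_j$, and bound the part of $\int_{\Omega_j}$ near $S$ by the integral over a tubular neighbourhood of the compact set $S\cap\overline{\Omega}_j$. There $F_0$, the smoothed $\varphi$ and $B$ are uniformly continuous, so your dominated-convergence argument goes through.

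\textbf{Smoothing $B$.} $B$ cannot be smoothed independently of $\Psi$ without spoiling plurisubharmonicity or negativity of $\Psi$. You never need this, though: continuity of $B$ on $\overline{\Omega}_j$ suffices.

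\textbf{Normalization.} Note that $(\sqrt{-1})^{k^2}dT\wedge d\bar T=2^k\,d\lambda(T)$. Your fibre computation and your value $\int_S|f|^2\,d\lambda_S=1$ in the optimality example both use $d\lambda_S=d\lambda/d\lambda(T)$, not the literal formula in the statement. With the literal formula, $\int_S|f|^2e^{-\varphi-B}\,d\lambda_S$ is $2^{-k}$ times your quantity. Then both your upper bound and your extremal example give the sharp constant $(2\pi)^k/k!$ instead of $\pi^k/k!$.

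State the normalization explicitly, and the argument is complete.
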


The $L^2$-extension theorem has numerous applications in complex analysis and complex geometry. Here, we briefly discuss only those related to $L^2$-division and strong openness.

Ohsawa gave a proof of Skoda's original $L^2$-division theorem using an $L^2$-extension theorem \cite{O04}. The geometric idea behind his argument can be described as follows. Let
$
g:E\longrightarrow Q
$
be a surjective holomorphic bundle morphism. The division problem may be regarded as the problem of determining whether the induced map
$
\Gamma(X,E)\longrightarrow \Gamma(X,Q)
$
is surjective, together with an appropriate $L^2$-estimate for a lifting.

After projectivization, the quotient morphism induces a natural closed embedding
\[
\mathbb{P}(Q)\hookrightarrow \mathbb{P}(E).
\]
Under the quotient convention used here, sections of $E$ and $Q$ are identified with sections of the tautological line bundles
\[
\mathcal{O}_{\mathbb{P}(E)}(1)
\quad\text{and}\quad
\mathcal{O}_{\mathbb{P}(Q)}(1),
\]
respectively. The division problem can therefore be interpreted as an extension problem: one seeks to extend a section of
$
\mathcal{O}_{\mathbb{P}(Q)}(1)
$
to a section of
$
\mathcal{O}_{\mathbb{P}(E)}(1).
$
Ohsawa's argument applies an $L^2$-extension theorem to this projective-bundle setting and thereby obtains an $L^2$-division result.

The $L^2$-extension theorem also played an important role in the original proof of the strong openness conjecture by Guan and Zhou \cite{GZ openness}. Their proof proceeds by contradiction and combines an $L^2$-extension argument with a curve-selection lemma. Thus, both $L^2$-division and strong openness can be approached through $L^2$-extension techniques.

Conversely, Albesiano gave in \cite{A25} a new proof of the $L^2$-extension theorem using an $L^2$-division theorem. Inspired by this argument, the author proved in \cite{M.T} a sharp $L^2$-division theorem and derived from it an alternative proof of the sharp $L^2$-extension theorem. The same sharp division theorem also implies a sharp effective openness result. In this sense, the sharp $L^2$-division theorem provides a common source for both sharp extension and sharp effective openness.

This raises the following natural converse problem:
\textit{Does the sharp $L^2$-extension theorem imply the sharp $L^2$-division theorem?}
Ohsawa's construction suggests that such an implication should be sought by applying a sharp extension theorem to the embedding
\[
\mathbb{P}(Q)\hookrightarrow \mathbb{P}(E).
\]
However, the known extension argument in this projective-bundle setting does not appear to recover the sharp constant in the corresponding division theorem.
Thus, the converse problem may also be viewed as asking whether one can
establish a stronger form of the sharp $L^2$-extension theorem, with an
optimal estimate attained even in such a geometrically nontrivial setting.

\section*{AI usage statement}
ChatGPT (OpenAI) was used as an editorial aid to check minor language and typographical errors, consistency of notation, and bibliographic details. It did not contribute to the formulation of the main mathematical results. All mathematical arguments, citations, and final wording were reviewed and verified by the author.


\begin{thebibliography}{99}
\bibitem{A25}
R. Albesiano,
\newblock \emph{From division to extension},
\newblock arXiv:2503.01061.

\bibitem{B openness}
B. Berndtsson,
\newblock \emph{The openness conjecture for plurisubharmonic functions},
\newblock arXiv:1305.5781.

\bibitem{B projective}
B. Berndtsson,
\newblock \emph{The openness conjecture for projective manifolds},
\newblock arXiv:1305.0544.

\bibitem{blocki suita}
Z. B{\l}ocki,
\newblock \emph{Suita conjecture and the Ohsawa--Takegoshi extension theorem},
\newblock Invent. Math. \textbf{193} (2013), no.~1, 149--158.

\bibitem{Demailly lec}
J.-P. Demailly,
\newblock \emph{Analytic Methods in Algebraic Geometry},
\newblock Surveys of Modern Mathematics, vol.~1, International Press, Somerville, MA; Higher Education Press, Beijing, 2012.

\bibitem{DK}
J.-P. Demailly and J. Koll{\'a}r,
\newblock \emph{Semi-continuity of complex singularity exponents and Kähler--Einstein metrics on Fano orbifolds},
\newblock Ann. Sci. {\'E}c. Norm. Sup{\'e}r. (4) \textbf{34} (2001), no.~4, 525--556.

\bibitem{FJ}
C. Favre and M. Jonsson,
\newblock \emph{Valuations and multiplier ideals},
\newblock J. Amer. Math. Soc. \textbf{18} (2005), no.~3, 655--684.

\bibitem{saitoh}
Q. Guan,
\newblock \emph{A proof of Saitoh's conjecture for conjugate Hardy $H^2$ kernels},
\newblock J. Math. Soc. Japan \textbf{71} (2019), no.~4, 1173--1179.

\bibitem{GZ openness}
Q. Guan and X. Zhou,
\newblock \emph{A proof of Demailly's strong openness conjecture},
\newblock Ann. of Math. (2) \textbf{182} (2015), no.~2, 605--616.

\bibitem{GZ eff}
Q. Guan and X. Zhou,
\newblock \emph{Effectiveness of Demailly's strong openness conjecture and related problems},
\newblock Invent. Math. \textbf{202} (2015), no.~2, 635--676.

\bibitem{GZ sharp eff}
Q. Guan,
\newblock \emph{A sharp effectiveness result of Demailly's strong openness conjecture},
\newblock Adv. Math. \textbf{348} (2019), 51--80.

\bibitem{GZ15}
Q. Guan and X. Zhou,
\newblock \emph{A solution of an $L^2$ extension problem with an optimal estimate and applications},
\newblock Ann. of Math. (2) \textbf{181} (2015), no.~3, 1139--1208.

\bibitem{O04}
T. Ohsawa,
\newblock \emph{Generalization of a precise $L^2$ division theorem},
\newblock Adv. Stud. Pure Math. \textbf{42} (2004), 249--261.

\bibitem{M.T}
M. Takakura,
\newblock \emph{On the sharp $L^2$ estimates of Skoda division theorem},
\newblock arXiv:2505.05938.

\end{thebibliography}
\end{document}